\documentclass[document]{amsart}
\usepackage{amsmath,amssymb,amsthm,enumerate,graphicx}
\usepackage[mathscr]{eucal}
\usepackage{tabularx}
\usepackage[margin=1.3in]{geometry}
\usepackage{tikz}
\usepackage{caption}
\usepackage{color}
\usepackage{subcaption}
\usepackage{pgfplots}
\pgfplotsset{compat=1.6}
\pgfplotsset{soldot/.style={color=black,only marks,mark=*}} \pgfplotsset{holdot/.style={color=black,fill=white,only marks,mark=*}}

\newtheorem{theorem}{Theorem}[section]
\newtheorem{lemma}[theorem]{Lemma}
\newtheorem{prop}[theorem]{Proposition}

\theoremstyle{definition}
\newtheorem{definition}[theorem]{Definition}
\newtheorem{example}[theorem]{Example}

\theoremstyle{remark}

\numberwithin{equation}{section}

\newcommand{\Q}{\mathbb{Q}}
\newcommand{\R}{\mathbb{R}}

\newcommand{\C}{\mathbb{C}}
\newcommand{\N}{\mathbb{N}}

\newcommand{\T}{\mathbb{T}}
\newcommand{\E}{\mathbb{E}}

\newcommand{\lra}{\longrightarrow}

\newcommand{\iv}{^{-1}}
\newcommand{\bs}{\backslash}

\newcommand{\card}{\text{card}}
\newcommand{\supp}{\text{Supp}}
\newcommand{\diag}{\text{Diag}}
\newcommand{\dist}{\text{dist}}

\begin{document}

\title{The spectrum of large unitarily invariant models with increasingly many spikes}
\author{Brady Thompson}
\date{}
\begin{abstract}
In this paper we study random matrix models where the matrices in question contain infinitely many spikes. Recent work has characterized the possible outliers in the spectrum of large deformed unitarily invariant models when the number of spikes in the model is fixed. We show that similar results hold when the number of spikes grows along with the size of the matrix and these spikes accumulate to the support of the limiting eigenvalue distribution. 
\end{abstract}
\maketitle


\section{Introduction}
	
Given the spectrum of two $N \times N$ Hermitian matrices $A_N$ and $B_N$, discovering the spectrum of $A_N + B_N$ is a rather difficult procedure. If we add a bit of randomness to the model, then free probability provides a helpful description of the spectrum of the sum.  The connection between free probability and random matrices was first made in the seminal work of Voiculescu \cite{voiculescu1986addition,voiculescu1987multiplication,voiculescu1991limit}. The tools developed in free probability theory provide a natural framework for studying the eigenvalue distribution of random matrix models. 

One of the models we consider is $X_N = A_N + U_N B_N U_N^*$ where $U_N$ is a $N \times N$ random unitary matrix distributed according to the Haar measure on the unitary group U$(N)$, often called a Haar unitary matrix. Furthermore, we suppose that the empirical eigenvalue distribution
\[\mu_{A_N} := \frac{1}{N}\sum_{i=1}^{N}\delta_{\lambda_i({A_N})}\] of $A_N$ and $\mu_{B_N}$ of $B_N$ converge weakly to the compactly supported measures $\mu$ and $\nu$ respectively. Speicher proved in \cite{speicher1993free} that $\mu_{X_N}$ converges weakly to the free additive convolution $\mu\boxplus\nu$ as $N \to \infty$ \cite{nica2006lectures,mingo2017free,voiculescu1992free}. Even though the empirical eigenvalue distribution of $X_N$ converges weakly to $\mu \boxplus \nu$, this does not necessarily imply that all the eigenvalues of $X_N$ converge to the support of $\mu \boxplus \nu$. Building on a series of results  about strong convergence of random matrices \cite{haagerup2005new,male2012norm,collins2014strong}, Collins and Male \cite{collins2014strong} provided conditions under which the eigenvalues of $X_N$ uniformly converge to the support of $\mu \boxplus \nu$. Their result states that, for independent Hermitian random matrices $A_N$ and $U_NB_NU_N^*$, if almost surely, the eigenvalues of $A_N$  uniformly converge to $\supp(\mu)$, and, almost surely, the eigenvalues of $U_NB_NU_N^*$ uniformly converge to $\supp(\nu)$, then the eigenvalues of $A_N + U_NB_NU_N^*$ uniformly converge to $\supp(\mu \boxplus \nu)$ almost surely. 

Our central problem is to examine particular situations where strong convergence does not occur. One such situation is realized by considering a finite sequence, $\theta_1, \dots, \theta_p$, of real numbers, such that $\theta_1, \dots, \theta_p \in \sigma(A_N)$ for every $N$, and such that each $\theta_i$ lies outside the support of $\mu$. We also impose that except for these values, the remaining eigenvalues converge uniformly to $\supp(\mu)$. These values, $\theta_1, \dots, \theta_p,$ are called the \textit{spikes}. 

Belinschi, Bercovici, Capitaine, and Fevrier gave a description of the limiting spectrum $\sigma(X_N)$ when both matrices $A_N$ and $B_N$ have a finite number of spikes \cite{belinschi2017outliers}. The theory of subordination plays a crucial role.  The defining property of the subordination functions, $\omega_1$ and $\omega_2$, is their relationship with the Cauchy transform. Namely, given two compactly supported probability measures, $\mu$ and $\nu$, we have
\[G_{\mu \boxplus \nu}(z) = G_{\mu}(\omega_1(z))  = G_{\nu}(\omega_2(z)).\]
For further information on subordination, we refer the reader to chapter 2 of \cite{mingo2017free} and section 3.4 of \cite{belinschi2017outliers}. The result of \cite{belinschi2017outliers} states that, almost surely, for any neighborhood $E$ of the set
\[ \displaystyle \supp(\mu \boxplus \nu)\cup \left[ \bigcup_{i=1}^p \omega_1\iv(\{\theta_i\})\right] \cup \left[ \bigcup_{j=1}^q \omega_2\iv(\{\tau_i\})\right] \]
there exists an $N_0 \in \N$ such that for all $N \geq N_0$ we have 
\[\sigma(X_N) \subset E.\]

We generalize the results of \cite{belinschi2017outliers}  to the case when there is an increasing number of spikes that grows along with the size of the matrix. We limit ourselves to the case when the spikes accumulate to boundary of the support of the limiting eigenvalue distribution, that is, 
\begin{enumerate}[\indent {$-$}]
\item $\theta_i$ does not belong to $\supp(\mu)$ for all $i = 1, 2, \dots$ and,
\item  $\dist(\theta_i, \supp(\mu)) \lra 0$ as $i \lra \infty$. 
\end{enumerate}
Our result states that precisely the same conclusions from \cite{belinschi2017outliers} hold regarding the asymptotic behavior of the eigenvalue distribution for the following models:
\begin{enumerate}[\indent {$-$}]
\item $X_N = A_N + U_NB_NU_N^*$ where $A_N = A_N^*$, $B_N = B_N^*$ are random matrices and $U_N$ is a Haar-distributed unitary random matrix;
\item $X_N = A_N^{1/2}U_NB_NU_N^*A_N^{1/2}$ where $A_N$, $B_N \geq 0$ are random matries, and $U_N$ is a Haar-distributed unitary random matrix;
\item $X_N = A_NU_NB_NU_N*$  where $A_N, B_N,U_N \in $ U$(N)$ are Haar-distributed unitary random matrices.
\end{enumerate}


\section{Statement of Main Results}
	The result of Collins and Male (Corollary 2.2 of \cite{collins2014strong}) mentioned in the introduction is an indispensable tool for the proof of our results. It is used several times throughout the paper and is given as a theorem below. 
\begin{theorem}\label{cm}
Let $A_N$ and $B_N$ be independent Hermitian random matrices. Assume that:
\begin{enumerate}[$1$.]
\item the law of one of the matrices is invariant under unitary conjugacy;
\item almost surely, the empirical eigenvalue distribution $\mu_{A_N}$ (respectively $\mu_{B_N}$) converges to a compactly supported probability measure $\mu$ (respectively $\nu$);
\item almost surely, for any neighborhood $E$ of the support of $\mu$ (respectively $\nu$), for large enough $N$, the eigenvalues of $A_N$ (respectively $B_N$) belong to $E$. 
\end{enumerate}
Then;
\begin{enumerate}[\indent {$-$}]
\item almost surely, for any neighborhood $F$ of the support of $\mu \boxplus \nu$, for $N$ large enough, the eigenvalues of $A_N+B_N$ belong to $F$;
\item if moreover $A_N$ is nonnegative, almost surely, for any neighborhood $G$ of the support of $\mu \boxtimes \nu$, for $N$ large enough, the eigenvalues of $A_N^{1/2}B_N A_N^{1/2}$ belong to $G$. 
\end{enumerate}
\end{theorem}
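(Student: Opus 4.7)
The plan is to prove Theorem~\ref{cm} in two main stages: first establish strong asymptotic freeness of $(A_N, B_N)$, then extract the spectral inclusion by a polynomial separation argument. By the unitary invariance hypothesis, I would first reduce to the case where $B_N = U_N D_N U_N^*$ with $U_N$ Haar-distributed on $\mathrm{U}(N)$ and $D_N$ the diagonal matrix of eigenvalues of $B_N$; conditioning on $A_N$ and $D_N$ then allows one to treat them as deterministic sequences converging in $*$-distribution and in operator norm to $\mu$ and $\nu$ respectively.

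Next, I would invoke the strong asymptotic freeness results of Haagerup-Thorbj\o rnsen, Male, and Collins-Male: under the stated convergence of the deterministic $A_N$ and $D_N$, the pair $(A_N, U_N D_N U_N^*)$ converges strongly to a free pair $(a,b)$ in a tracial $W^*$-probability space, meaning that
\[
\lim_{N \to \infty} \bigl\|P(A_N, U_N D_N U_N^*)\bigr\| = \bigl\|P(a,b)\bigr\|
\]
almost surely for every non-commutative polynomial $P$. In particular, $a + b$ has distribution $\mu \boxplus \nu$, and when $\mu$ is supported on $[0,\infty)$ the self-adjoint element $a^{1/2} b a^{1/2}$ has distribution $\mu \boxtimes \nu$.

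To pass from norm convergence of polynomials to eigenvalue containment, I would apply the following separation trick. Fix $\lambda_0 \in \R \setminus \supp(\mu \boxplus \nu)$; by Weierstrass approximation on a compact set separating $\lambda_0$ from $\supp(\mu \boxplus \nu)$, choose a real polynomial $Q$ with $Q(\lambda_0) = 1$ and $\sup_{x \in \supp(\mu \boxplus \nu)} |Q(x)| < 1/4$. Applying strong convergence to the self-adjoint non-commutative polynomial $(x,y) \mapsto Q(x+y)$ yields $\|Q(A_N + B_N)\| < 1/2$ for $N$ sufficiently large, which forces every eigenvalue of $A_N + B_N$ to avoid a neighborhood of $\lambda_0$. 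Since $\|A_N + B_N\|$ stays uniformly bounded (again by strong convergence applied to $x + y$), a finite cover of the portion of $\R$ lying outside any open neighborhood of $\supp(\mu \boxplus \nu)$ and inside a fixed ball completes the additive case; the argument for $A_N^{1/2} B_N A_N^{1/2}$ is identical, using that $a^{1/2}$ is obtained by continuous functional calculus. The main obstacle is the deep strong asymptotic freeness input, whose proof requires delicate Weingarten moment estimates for Haar unitaries together with sharp concentration on the unitary group; by contrast, the polynomial separation step is essentially elementary once strong convergence is in hand.
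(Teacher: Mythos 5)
The paper does not supply a proof of Theorem~\ref{cm}: it is imported verbatim from Collins and Male as Corollary~2.2 of \cite{collins2014strong}, so there is no internal proof to compare against. Your sketch, however, is a faithful outline of how that corollary is actually obtained from their main theorem. The reduction to deterministic diagonal $A_N$ and $D_N$ conjugated by an independent Haar unitary is the right first move, and invoking strong asymptotic freeness of $(A_N, U_N D_N U_N^*)$ is precisely the deep input; once that is granted, the polynomial separation trick---choose $Q$ real with $Q(\lambda_0)=1$ and $\sup_{\supp(\mu\boxplus\nu)}|Q|<1/4$, use strong convergence of $\|Q(A_N+B_N)\|$ to expel eigenvalues from a neighborhood of $\lambda_0$, and cover the compact exterior region by finitely many such neighborhoods---is the standard mechanism for converting norm-of-polynomial convergence into spectral confinement.

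Two bookkeeping points you should not leave implicit in a complete write-up. First, the almost-sure qualifier must be made simultaneous over a countable dense family of polynomials and of test points $\lambda_0$ before the finite-cover step, otherwise the exceptional null sets are not controlled. Second, in the multiplicative case $a^{1/2}$ is not a polynomial in $a$; you either approximate $t\mapsto t^{1/2}$ uniformly by polynomials on the compact set $\supp(\mu)\subset[0,\infty)$ and propagate the error, or you apply strong convergence to polynomials in the pair $(A_N,B_N)$ directly and then pass to $A_N^{1/2}B_NA_N^{1/2}$ by continuous functional calculus, noting that $\lim_N\|A_N^{1/2}-p(A_N)\|$ can be made arbitrarily small. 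Neither is a genuine gap, but your proof is, in the end, a reduction of the stated corollary to the Collins--Male strong-freeness theorem, and that theorem---with its Weingarten-calculus moment estimates and concentration on $\mathrm{U}(N)$---is where the real work lives; your sketch correctly treats it as a black box.
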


We are interested in a case where condition (3) of the preceding theorem is not satisfied. More precisely, with respect to the additive matrix model, $X_N = A_N +U_NB_NU_N^*$, our goal is to describe the eigenvalue distribution of $X_N$ where the matrix $A_N$ has  spikes $\theta_1, \theta_2, \dots$, and $B_N$ also has spikes $\tau_1, \tau_2, \dots$. The case of finitely many spikes was considered by \cite{belinschi2017outliers}, and their result is given as a theorem below.  We use the notation that for any set $E \subset \R$, we have $E_\varepsilon := E+ (-\varepsilon, \varepsilon)$.

\begin{theorem}\label{bbcf} Suppose the following,
\begin{enumerate}[$1$.]
\item Two compactly supported Borel probability measures $\mu$ and $\nu$ on $\R$. 
\item A positive integer $p$ and fixed real numbers 
\[\theta_1 \geq \theta_2 \geq \dots \geq \theta_p\]
which do not belong to $\text{\emph{Supp}}(\mu)$. 
\item A sequence $\{A_N\}_{N\in\N}$ of deterministic Hermitian matrices of size $N\times N$ such that:
\begin{enumerate}[\indent {$-$}]
\item $\mu_{A_N}$ converges weakly to $\mu$ as $N \lra \infty$;
\item for $N\geq p$ and $\theta \in \{ \theta_1, \dots, \theta_p\}$, the sequence $(\lambda_n(A_N))_{n=1}^{N}$ satisfies 
\[\text{\emph{card}}(\{n : \lambda_N(A_N) = \theta\}) = \text{\emph{card}}(\{i : \theta_i = \theta\});\]
\item the eigenvalues of $A_N$ which are not equal to some $\theta_i$ converge uniformly to $\text{\emph{Supp}}(\mu)$ as $N \to \infty$, that is 
\[\lim_{N \to \infty} \max_{\lambda_n(A_N) \notin \{\theta_1,\dots, \theta_{p}\}} \text{\emph{dist}}(\lambda_n(A_N),\text{\emph{Supp}}(\mu)) = 0.\]
\end{enumerate}
\item A positive integer $q$ and fixed real numbers
\[\tau_1 \geq \tau_2 \geq \dots \geq \tau_q\]
which do not belong to $\text{\emph{Supp}}(\nu)$.
\item A sequence $\{B_N\}_{N \in \N}$ of deterministic Hermitian matrices of size $N \times N$ such that
\begin{enumerate}[\indent {$-$}]
\item $\mu_{B_N}$ converges weakly to $\nu$ as $N \lra \infty$;
\item for $N\geq q$ and $\tau \in \{ \tau_1, \dots, \tau_q\}$, the sequence $(\lambda_n(B_N))_{n=1}^{N}$ satisfies 
\[\text{\emph{card}}(\{n : \lambda_N(B_N) = \tau\}) = \text{\emph{card}}(\{j : \tau_j = \tau\});\]
\item the eigenvalues of $B_N$ which are not equal to some $\tau_j$ converge uniformly to $\text{\emph{Supp}}(\nu)$ as $N \to \infty$.
\end{enumerate}
\item A sequence $\{U_N\}_{N \in \N}$ of unitary random matrices such that the distribution of $U_N$ is the normalized Haar measure on the unitary group U$(N)$.
\end{enumerate}
With the above notations, set $K =\text{\emph{Supp}}(\mu \boxplus \nu)$ and 
\[K' = \displaystyle K \cup \left[ \bigcup_{i=1}^p \omega_1\iv(\{\theta_j\})\right] \cup \left[ \bigcup_{j=1}^q \omega_2\iv(\{\tau_i\})\right] \]
where $\omega_1$ and $\omega_2$ are the subordination functions. Define $X_N = A_N + U_NB_NU_N^*$. 

Then,

\begin{enumerate}[$1$.]
\item Given $\varepsilon >0$, almost surely, there exists an $N_0 \in \N$ such that for all $N >N_0$, we have \[\sigma(X_N) \subset K'_{\varepsilon}.\]
\item  Fix a number $\rho \in K'\bs K$. Let $\varepsilon > 0 $ such that $(\rho - 2\varepsilon , \rho+2\varepsilon)\cap K' = \{\rho\}$ and set $k = \text{\emph{card}}(\{i:\omega_1(\rho) = \theta_i\})$ and $\ell= \text{\emph{card}}(\{j:\omega_2(\rho) = \tau_j\})$ then almost surely, there exists an $N_0 \in \N$ such that for all $N>N_0$, we have \[\text{\emph{card}}(\{\sigma(X_N) \cap (\rho-\varepsilon,\rho +\varepsilon)\}) = k+\ell.\]

\end{enumerate}
\end{theorem}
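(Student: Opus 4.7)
My plan is to reduce the theorem to the setting of Theorem \ref{cm} by excising the spikes and then recovering them through a finite-rank perturbation analysis. First, I would construct auxiliary deterministic matrices $\widetilde{A}_N$ and $\widetilde{B}_N$ obtained from $A_N$ and $B_N$ by replacing each spike eigenvalue $\theta_i$ (resp.\ $\tau_j$) by a value inside $\supp(\mu)$ (resp.\ $\supp(\nu)$). These modified matrices satisfy hypothesis (3) of Theorem \ref{cm}, so almost surely the spectrum of $\widetilde{X}_N := \widetilde{A}_N + U_N \widetilde{B}_N U_N^*$ is eventually contained in every $\varepsilon$-neighborhood of $K = \supp(\mu \boxplus \nu)$. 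Since $X_N - \widetilde{X}_N$ has rank at most $p+q$, Weyl's interlacing forces the spectrum of $X_N$ to lie in $K_\varepsilon$ except for at most $p+q$ outlying eigenvalues; it then remains to locate and count these.

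For this, I would fix $z$ at positive distance from $K$ and exploit the rank-$(p+q)$ structure. Writing $V_N$ for the $N \times p$ isometry whose columns span the spike eigenspaces of $A_N$, and $W_N$ for the corresponding $N \times q$ isometry for $B_N$, one has
\[
X_N = \widetilde{X}_N + V_N \Theta V_N^* + U_N W_N T W_N^* U_N^*
\]
for suitable diagonal matrices $\Theta$ and $T$ recording the spike deviations. The Woodbury identity (equivalently, a Schur-complement computation) shows that $z \in \sigma(X_N) \bs \sigma(\widetilde{X}_N)$ if and only if a certain $(p+q) \times (p+q)$ matrix $M_N(z)$, built from $\Theta$, $T$, and from bilinear forms of the resolvent $R_N(z) := (z - \widetilde{X}_N)\iv$ against $V_N$ and $U_N W_N$, is singular.

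The crucial step is then the almost-sure convergence of $M_N(z)$, uniformly on compact subsets of $\C \bs K$, to a deterministic block-diagonal matrix expressed through the subordination functions. Using asymptotic freeness of the spike subspaces from $\widetilde{X}_N$ combined with Gromov-Milman-type concentration for Lipschitz functions on the unitary group, the diagonal block $V_N^* R_N(z) V_N$ concentrates on a deterministic matrix depending on $G_\mu(\omega_1(z))$, the block $W_N^* U_N^* R_N(z) U_N W_N$ concentrates on a matrix depending on $G_\nu(\omega_2(z))$, and the off-diagonal blocks vanish almost surely. The determinant of the resulting limit factors over the spikes; its zero set in $\C \bs K$ is exactly $\bigcup_i \omega_1\iv(\{\theta_i\}) \cup \bigcup_j \omega_2\iv(\{\tau_j\})$, yielding conclusion (1).

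For conclusion (2), I would apply the argument principle (Rouché's theorem) to $\det M_N(z)$ on a small circle of radius $\varepsilon$ around $\rho$: the number of zeros inside converges to the multiplicity of $\rho$ as a zero of the limiting determinant, which by the block factorization equals $k + \ell$. The hardest part I anticipate is the uniform control required on contours approaching the boundary of $K$. Extending the subordination asymptotic up to points $\rho \in K' \bs K$ that may be arbitrarily close to $K$ requires analytically continuing $\omega_1$ and $\omega_2$ across the real axis in a neighborhood of $\rho$, and proving concentration estimates for bilinear forms in $U_N$ sharp enough that $M_N(z)$ converges to its deterministic limit uniformly up to such contours. This analytic continuation, combined with a deterministic equivalent for the resolvent of $\widetilde{X}_N$ at the boundary, would constitute the technical heart of the proof.
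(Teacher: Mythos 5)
The paper does not give its own proof of Theorem~\ref{bbcf}; it quotes the result from \cite{belinschi2017outliers} and then adapts the strategy of that proof when establishing the generalization Theorem~\ref{main}. That strategy (used both by BBCF and, mutatis mutandis, in Section~4 here) is a \emph{two-step} reduction: first suppress the spikes of $B_N$, reduce to a $p\times p$ determinant $F_N(z)=I_p-P_N(zI_N-X_N')^{-1}P_N^*\Theta$ via the Weinstein--Aronszajn factorization, and pass to the limit $\mathrm{diag}\bigl(1-(\theta_i-\alpha)/(\omega_1(z)-\alpha)\bigr)$ using a Hurwitz-type argument (Lemma~\ref{4.5}); then repeat with the roles of $A$ and $B$ swapped to handle the $B$-spikes; and finally, when $k,\ell>0$ simultaneously at the same outlier $\rho$, invoke a spectral-projection perturbation estimate (Lemma~\ref{4.10}) to separate and count. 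Your proposal takes a genuinely different route: you collapse everything at once into a single $(p+q)\times(p+q)$ linearization $M_N(z)$ built from $V_N^*R_N(z)V_N$, $W_N^*U_N^*R_N(z)U_NW_N$, and the cross blocks. This is valid, but it transfers the work from the final perturbation step to a new ingredient: you must show the off-diagonal blocks $V_N^*R_N(z)U_NW_N$ vanish in the limit. That claim is true and is worth flagging as a nontrivial step — it follows from the phase invariance $U_N\mapsto e^{i\phi}U_N$ of Haar measure (which forces the expectation to vanish) together with the Lipschitz concentration for unitary bilinear forms — but it is precisely what the two-step BBCF approach is designed to avoid. In exchange, your route dispenses with Lemma~\ref{4.10} entirely, which is arguably cleaner.

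Two smaller remarks. First, the concentration limit of the $A$-block is $\bigl(1/(\omega_1(z)-\alpha)\bigr)I_p$ (with $\alpha$ the common replacement value), as delivered by Lemma~\ref{bbcf5.1}; describing it as ``depending on $G_\mu(\omega_1(z))$'' is slightly imleading since the dependence is on the replacement value, not on an integral over $\mu$. Second, your worry about contours approaching $\partial K$ is not actually an issue for Theorem~\ref{bbcf}: with $p,q$ fixed the set $K'\setminus K$ is finite, each $\rho$ has positive distance from $K$, and the uniform-on-compacts convergence of $M_N$ on $\overline{\C}\setminus K_\varepsilon$ already suffices. That boundary concern becomes relevant only in the paper's generalization with infinitely many accumulating spikes, where it is handled by truncating to the finitely many $\theta_i$ outside $\supp(\mu)_{\varepsilon/2}$ before the determinant reduction. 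The Weyl-interlacing observation you open with is a nice sanity check but is not needed once the determinant factorization is in place.
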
 

These elements of $K'\bs K$ are called the \textit{outliers} of the model. This result demonstrates that we can use the subordination functions to calculate the outliers that arise from the spiked model.We give the following example. 

\begin{example}\label{example1}

For this numerical simulation we let $N = 1000$. Let $A_N = (3I_{N/2})\oplus(-3I_{N/2})$. Define the matrix $B_N$ as 
\[B_N =  \begin{bmatrix}
     \frac{W}{\sqrt{N-2}} & 0_{(N-2)\times 1} & 0_{(N-2)\times 1} \\
    0_{1\times (N-2)}  & -10 & 0\\
    0_{1\times (N-2)} & 0 & 7
\end{bmatrix}\]
where $W$ is a $998\times998$ GUE.  The histogram of the eigenvalues of one sample $X_N = B_N + U_NB_NU_N^*$ is shown in  Figure \ref{figure1}. We see the presence of four outliers in the distribution, and using Theorem \ref{bbcf} we can calculate them explicitly. 

\begin{figure}[h]
\includegraphics[width=10cm]{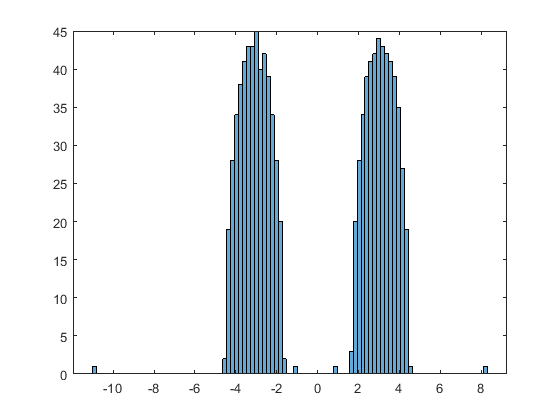}
\caption{Eigenvalue Distribution of $X_N$ where $B_N$ has spikes}\label{figure1}
\end{figure}

We know that $\mu_{A_N} \lra \mu = \frac{1}{2}(\delta_{-3} + \delta_{3}$) and $\mu_{B_N}$ converges weakly to the semicircle distribution, which we will denote as $s$. Computing the Cauchy transform of $\mu$ gives,
\[G_{\mu}(z) = \int_{\R} \frac{1}{z-t}d\mu(t) = \frac{1}{2}\left( \frac{1}{z+3} +\frac{1}{z-3}\right) = \frac{z}{z^2 - 9}. \]
Using the fact that the $R$-transform of the semicircle distribution gives $R_{s}(z) = z$, and the compositional inverse of the Cauchy transform is $G_{\nu}^{\langle -1 \rangle}(z) = R_{\nu}(z) + \frac{1}{z}$, we get
\[G_{s}(z) = \frac{z\pm\sqrt{z^2-4}}{2}.\]
We choose the plus or minus depending of the sign of $z$. Recall the property of the subordination functions that 
\begin{align}
\omega_{\mu}(z) + \omega_{s}(z) - z &= \frac{1}{G_{\mu}(\omega_{\mu}(z))},\label{1.1}\\
\omega_{\mu}(z) + \omega_{s}(z) - z &= \frac{1}{G_{s}(\omega_{s}(z))}.\label{1.2} 
\end{align}
If we first consider the case where $\omega_s(z) > 0$. Solving equation (\ref{1.1}) for $\omega_{\mu}(z)$ gives
\[\omega_{\mu}(z) = \frac{-9}{\omega_s(z) - z}\]
and substituting into (\ref{1.2}) and solving for $z$ gives the following equation
\begin{equation}
z = \omega_s(z) - \frac{1 \pm \sqrt{1+9\cdot4\left(\frac{\omega_s(z) - \sqrt{\omega_s(z)^2 - 4}}{2}\right)^2}}{\omega_s(z) -\sqrt{\omega_s(z)^2 - 4}}.
\end{equation}
Similarly, the case where $\omega_s(z) < 0 $ we get 
\begin{equation}\label{CalculateOutliers}
z = \omega_s(z) - \frac{1 \pm \sqrt{1+9\cdot4\left(\frac{\omega_s(z)+ \sqrt{\omega_s(z)^2 - 4}}{2}\right)^2}}{\omega_s(z) + \sqrt{\omega_s(z)^2 - 4}}.
\end{equation}

Theorem \ref{bbcf} indicates that the relationship between spikes $\tau$ of $B_N$,  and the outliers $\rho$ is $\omega_s(\rho) = \tau$. Hence, if we evaluate equation (\ref{CalculateOutliers}) at $z = \rho$, then we see that the spike $\tau_1 = 7$ produces the outliers 
\[\rho_1 = 7- \frac{1 + \sqrt{1+9\cdot4\left(\frac{7- \sqrt{7^2 - 4}}{2}\right)^2}}{7-\sqrt{7^2 - 4}} \approx -0.9817\]
and 
\[\rho_2 = 7- \frac{1 - \sqrt{1+9\cdot4\left(\frac{7 - \sqrt{7^2 - 4}}{2}\right)^2}}{7-\sqrt{7^2 - 4}} \approx 8.1276.\]

Similarly, calculating the outliers that are generated by the spike $\tau_2 = -10$ gives $\rho_3 \approx 0.7372$ and $\rho_4 \approx  -10.8382$. 

We can also suppose that $A_N$ is spiked, for example, 
\[A_N =  \begin{bmatrix}
     3I_{499}		& 0_{499\times 499} 		& 0_{499 \times 1} 		&  0_{499 \times 1} \\
    0_{499\times 499}  & -3I_{499} 		& 0 _{499 \times 1}		 & 0_{499 \times 1} \\
    0_{1\times 499}  	 & 0_{1\times 499}	 & -5				&0 \\
    0_{1\times 499}  & 0_{1\times 499}   	& 0 					&6\\
\end{bmatrix}\]
We notice in Figure \ref{figure2} the presence of two additional spikes. 

\begin{figure}[h]
\includegraphics[width=10cm]{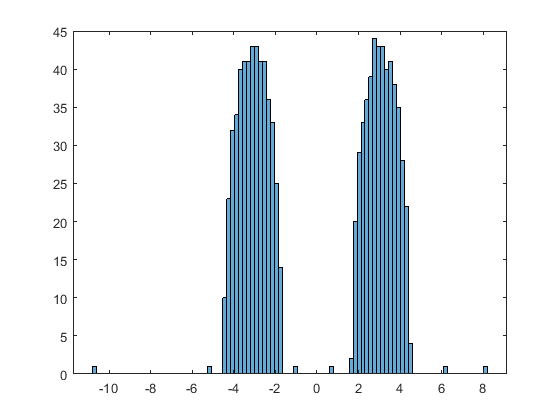}
\caption{Eigenvalue Distribution of $X_N$ where both $A_N$ and $B_N$ have spikes}\label{figure2}
\end{figure}

In order to calculate the outliers produced by these spikes we follow the same procedure as above. Solving for $\omega_s(z)$ in (\ref{1.2}) gives 
\[\omega_s(z) = \frac{-1 -\omega_{\mu}(z)^2 + 2\omega_{\mu}(z) z - z^2}{\omega_{\mu}(z) - z}\]
and substituting into equation (\ref{1.1}) and solving for $z$ gives
\[\frac{-1 -\omega_{\mu}(z)^2 + 2\omega_{\mu}(z) z - z^2}{\omega_{\mu}(z) - z} + \omega_{\mu}(z) -z = \frac{\omega_{\mu}(z)^2-9}{\omega_{\mu}(z)}\]
and 
\[z = \frac{\omega_{\mu}(z)  ( \omega_{\mu}(z) ^2-8)}{ \omega_{\mu}(z) ^2-9}.\]
Theorem \ref{bbcf} indicates that the relationship between spikes $\theta$ of $A_N$ and outliers $\rho$ is $\omega_{\mu}(\rho) = \theta$. When we evaluate the above equation at $z = \rho$, we see that the spike $\theta_1 = -5$ produces the outlier
\[\rho_5 = \frac{-5  ( (-5)^2-8)}{ (-5)^2-9} = -5.3125\]
and the spike $\theta_2 = 6$ produces the outlier $\rho_6 = \frac{56}{9} \approx 6.2222.$

\end{example}

It is our goal to extend this result to the case when the number of spikes in  $A_N$ and $B_N$ tends to infinity as $N \to \infty$. More precisely, let $(\theta_i)_{i \in \N}$ be a real-valued sequence such that $\theta_i \notin \supp(\mu)$ for $i = 1, 2, \dots$ and $\dist(\theta_i, \supp(\mu)) \to 0$ as $i \to \infty$. Also, let $(\tau_j)_{j\in \N}$ be a real-valued sequence such that $\tau_j \notin \supp(\nu)$ for $j = 1,2, \dots$ and $\dist(\tau_j, \supp(\nu)) \to 0$ as $j \to \infty$. To ensure that these spikes do not influence the limiting empirical spectral distributions of $A_N$ and $B_N$, and hence that of $X_N$, we have to control the rate at which we add additional spikes to the model. 

\begin{prop}\label{Qseq}
Let $(\varphi(N))_{N \in \N}$ be a monotonically increasing, nonnegative integer-valued sequence such that
\begin{enumerate}[$1$.]
\item $\displaystyle \frac{\varphi(N)}{N} \to 0$ as $N \to \infty$;
\item $N \geq \varphi(N)$ for every $N \in \N$. 
\end{enumerate}
Let $D_N$ be a deterministic diagonal matrix, $D_N = \displaystyle \text{\emph{Diag}}\left(\alpha_1^{(N)}, \dots, \alpha_{N}^{(N)}\right)$ such that $\mu_{D_N}$ converges weakly to a compactly supported measure $\mu$ as $N \lra \infty$. Then the spiked sequence of matrices 
\[\widetilde{D}_N =  \displaystyle \text{\emph{Diag}}\left(\theta_1, \dots, \theta_{\varphi(N)}, \alpha_1^{(N)}, \dots, \alpha_{\varphi(N) - N}^{(N)}\right) \]
also has the property that $\mu_{\widetilde{D}_N}$ converges weakly to $\mu$ as $N \to \infty$.
\end{prop}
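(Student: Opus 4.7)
The plan is to establish the weak convergence $\mu_{\widetilde{D}_N} \to \mu$ by viewing $\widetilde{D}_N$ as a small perturbation of $D_N$ (small in total variation of the empirical measures, not in any operator-theoretic sense). The two diagonal matrices share $N - \varphi(N)$ of their entries and differ in only $\varphi(N)$ positions, so the atoms of $\mu_{\widetilde{D}_N}$ and $\mu_{D_N}$ coincide except on a set whose total mass is at most $\varphi(N)/N$.

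The key step is the following elementary estimate. For any $f \in C_b(\R)$ with $\|f\|_\infty \leq M$, writing the two empirical measures as averages of Dirac masses and cancelling the shared terms gives
\[
\left| \int f\, d\mu_{\widetilde{D}_N} - \int f\, d\mu_{D_N}\right| \;=\; \frac{1}{N}\left|\sum_{i=1}^{\varphi(N)} f(\theta_i) \,-\, \sum_{i=N-\varphi(N)+1}^{N} f(\alpha_i^{(N)})\right| \;\leq\; \frac{2M\,\varphi(N)}{N}.
\]
By hypothesis (1) the right-hand side tends to zero, so this bound together with the triangle inequality and the assumed weak convergence $\mu_{D_N} \to \mu$ yields $\int f\, d\mu_{\widetilde{D}_N} \to \int f\, d\mu$ for every $f \in C_b(\R)$, which is exactly the desired weak convergence.

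There is really no serious obstacle here: the argument is essentially the trivial total variation bound $\|\mu_{\widetilde D_N} - \mu_{D_N}\|_{\mathrm{TV}} \leq 2\varphi(N)/N$. I would remark that we do \emph{not} need the spikes $\theta_i$ to be uniformly bounded, nor do we need the original eigenvalues $\alpha_i^{(N)}$ to be uniformly bounded in $N$: because we test against bounded continuous functions the single sup-norm constant $M$ absorbs any such values. Hypothesis (1) is used only through $\varphi(N)/N \to 0$, while hypothesis (2) is needed merely so that $\widetilde{D}_N$ is well defined, i.e.\ so that $D_N$ contributes the remaining $N - \varphi(N) \geq 0$ diagonal entries.
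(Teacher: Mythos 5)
Your proof is correct and takes essentially the same approach as the paper's, which also bounds the (total variation) difference between $\mu_{D_N}$ and $\mu_{\widetilde D_N}$ by $O(\varphi(N)/N)$ and concludes by weak convergence of $\mu_{D_N}$. Your version, which tests directly against $f \in C_b(\R)$, is simply a more explicit spelling-out of why the total variation estimate transfers to weak convergence.
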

\begin{proof}

Notice that 
\[\mu_{\tilde{D}_N} = \sum_{i=1}^{\varphi(N)} \frac{1}{N} \delta_{\theta_i} + \sum_{i=1}^{N-\varphi(N)} \frac{1}{N} \delta_{\alpha_i^{(N)}}.\]
Computing the total variation of $\mu_{D_N} - \mu_{\widetilde{D}_N}$ gives
\begin{align*}
\big|\big| \mu_{D_N} - \mu_{\widetilde{D}_N} \big|\big| &= \huge|\huge|\sum_{i=1}^{N} \frac{1}{N} \delta_{\alpha_i^{(N)}} -\sum_{i=1}^{\varphi(N)} \frac{1}{N} \delta_{\theta_i} - \sum_{i=1}^{N-\varphi(N)} \frac{1}{N} \delta_{\alpha_i^{(N)}}\huge|\huge|\\
&= \huge|\huge|    \sum_{i=N-\varphi(N)+1}^{N} \frac{1}{N} \delta_{\alpha_i^{(N)}} -\sum_{i=1}^{\varphi(N)} \frac{1}{N} \delta_{\theta_i}   \huge|\huge| \\
& \leq \huge|\huge|      \sum_{i=N-\varphi(N)+1}^{N} \frac{1}{N} \delta_{\alpha_i^{(N)}}         \huge|\huge| + \huge|\huge|          \sum_{i=1}^{\varphi(N)} \frac{1}{N} \delta_{\theta_i}           \huge|\huge| \\
& = \frac{\varphi(N)-1}{N} + \frac{\varphi(N)}{N} \to 0 ~~  \text{ as }~~ N \to \infty\\
\end{align*}
and since $\mu_{D_N}$ converges weakly to $\mu$, so does $\mu_{\widetilde{D}_N}$. 

\end{proof}

With this proposition in hand, we can now state the main result for the additive model. The results for the multiplicative cases are given in their respective section. 
\begin{theorem}\label{main}
Suppose we have the following:
\begin{enumerate}[$1$.]
\item Two compactly supported Borel probability measures $\mu$ and $\nu$ on $\R$. 

\item A sequence of fixed real numbers $\{\theta_i\}_{i=1}^{\infty}$ such that:
\begin{enumerate}[\indent {$-$}]
\item $\theta_i$ does not belong to $\text{\emph{\supp}}(\mu)$ for all $i = 1, 2, \dots$;
\item  \text{\emph{dist}}$(\theta_i, \text{\emph{Supp}}(\mu)) \lra 0$ as $i \lra \infty$. 
\end{enumerate}

\item A sequence $\{A_N\}_{N\in\N}$ of random Hermitian matrices of size $N\times N$ such that:
\begin{enumerate}[\indent {$-$}]
\item $\mu_{A_N}$ converges weakly to $\mu$ as $N \lra \infty$;
\item a sequence $(\varphi(N))_{N \in \N}$ satisfying the conditions in  Proposition \ref{Qseq};
\item for $\theta \in \{\theta_1, \dots, \theta_{\varphi(N)}\}$, the sequence $(\lambda_n(A_N))_{n=1}^{N}$ satisfies 
\[\text{\emph{card}}(\{n : \lambda_n(A_N) = \theta\}) = \text{\emph{card}}(\{i : \theta_i = \theta\});\]
\item the eigenvalues of $A_N$ which are not equal to some $\theta_i$ converge uniformly to $\text{\emph{Supp}}(\mu)$ as $N \lra \infty$, that is 
\[\lim_{N \lra \infty} \max_{\lambda_n(A_N) \notin \{\theta_1,\dots, \theta_{\varphi(N)}\}} \text{\emph{dist}}(\lambda_n(A_N),\text{\emph{Supp}}(\mu)) = 0.\]
\end{enumerate}

\item A sequence $\{U_N\}_{N \in \N}$ of unitary random matrices such that the distribution of $U_N$ is the normalized Haar measure on the unitary group U$(N)$.

\item A sequence of fixed real numbers $\{\tau_j\}_{j=1}^{\infty}$ such that:
\begin{enumerate}[\indent {$-$}]
\item $\tau_j$ does not belong to $\text{\emph{Supp}}()\nu)$ for all $j = 1, 2, \dots$;
\item  \text{\emph{dist}}$(\tau_j, \text{\emph{Supp}}(\nu)) \lra 0$ as $j \lra \infty$. 
\end{enumerate}

\item A sequence $\{B_N\}_{N\in\N}$ of random Hermitian matrices of size $N\times N$ such that:
\begin{enumerate}[\indent {$-$}]
\item $\mu_{B_N}$ converges weakly to $\nu$ as $N \lra \infty$;
\item a sequence $(\psi(N))_{N \in \N}$ satisfying the conditions in  Proposition \ref{Qseq};
\item for $\tau \in \{\tau_1, \dots, \tau_{\psi(N)}\}$, the sequence $(\lambda_n(B_N))_{n=1}^{N}$ satisfies 
\[\text{\emph{card}}(\{n : \lambda_n(B_N) = \tau\}) = \text{\emph{card}}(\{j : \tau_j = \tau\});\]
\item the eigenvalues of $B_N$ which are not equal to some $\tau_j$ converge uniformly to $\text{\emph{Supp}}(\nu)$ as $\N \lra \infty$, that is 
\[\lim_{N \lra \infty} \max_{\lambda_n(B_N) \notin \{\theta_1,\dots, \theta_{\psi(N)}\}} \text{\emph{dist}}(\lambda_n(B_N),\text{\emph{Supp}}(\nu)) = 0.\]
\end{enumerate}
\end{enumerate}

Set $K = \text{\emph{Supp}}(\mu \boxplus \nu)$ and $\displaystyle K' = K \cup \left[ \bigcup_{i=1}^{\infty} \omega_1^{-1} (\{\theta_i\})\right] \cup \left[ \bigcup_{j=1}^{\infty} \omega_2^{-1} (\{\tau_j\})\right]$ where $\omega_1,\omega_2$ are the subordination functions. Then,
\begin{enumerate}[$1$.]
\item Given $\varepsilon >0$, almost surely, there exists an $N_0 \in \N$ such that for all $N >N_0$, we have \[\sigma(X_N) \subset K'_{\varepsilon}.\]
\item  Fix a number $\rho \in K'\bs K$. Let $\varepsilon > 0 $ such that $(\rho - 2\varepsilon , \rho+2\varepsilon)\cap K' = \{\rho\}$ and set $k = \text{\emph{card}}(\{i:\omega_1(\rho) = \theta_i\})$ and $\ell = \text{\emph{card}}(\{j:\omega_2(\rho) = \tau_j\})$, then almost surely, there exists an $N_0 \in \N$ such that for all $N>N_0$, we have \[\text{\emph{card}}(\{\sigma(X_N)\cap (\rho-\varepsilon,\rho +\varepsilon)\}) = k+l.\]
\end{enumerate}
\end{theorem}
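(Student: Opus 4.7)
The overall strategy is to reduce to Theorem~\ref{bbcf} by a truncation argument and then pass back via Weyl's inequality. Given $\varepsilon > 0$, I would choose $\delta > 0$ small (to be specified) and let $p = p(\delta)$ be the number of indices with $\dist(\theta_i, \supp(\mu)) \geq \delta$; likewise $q = q(\delta)$ for the $\tau_j$. Both are finite because of the accumulation hypotheses on $(\theta_i)$ and $(\tau_j)$. Form $\widetilde{A}_N$ by replacing each of the ``near'' spikes $\theta_i$ ($i > p$) with the closest point of $\supp(\mu)$, so that $\|A_N - \widetilde{A}_N\|_{\mathrm{op}} \leq \delta$; define $\widetilde{B}_N$ analogously and set $\widetilde{X}_N = \widetilde{A}_N + U_N \widetilde{B}_N U_N^*$. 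By Weyl's inequality one has $|\lambda_n(X_N) - \lambda_n(\widetilde{X}_N)| \leq 2\delta$ for every $n$, so eigenvalue information transfers from $\widetilde{X}_N$ to $X_N$ with an error of $2\delta$.

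The crucial technical input is the following uniform boundary continuity: for $\delta$ small enough, every near spike $\theta$ (i.e., with $\dist(\theta, \supp(\mu)) < \delta$) satisfies $\omega_1^{-1}(\{\theta\}) \subset K_\varepsilon$, and likewise for $\omega_2$. This follows from the continuous extension of $\omega_1$ to $\R \setminus K$ with $\omega_1(\R \setminus K) \subset \R \setminus \supp(\mu)$ (a consequence of the identity $G_{\mu \boxplus \nu} = G_\mu \circ \omega_1$ together with the real-analytic extensions of these Cauchy transforms beyond their respective supports), the asymptotic $\omega_1(z) \sim z$ at infinity to preclude escape to infinity, and compactness. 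With this in hand, I would verify that $\widetilde{A}_N$ and $\widetilde{B}_N$ fulfill the hypotheses of Theorem~\ref{bbcf}: weak convergence $\mu_{\widetilde{A}_N} \to \mu$ follows from $\|\mu_{A_N} - \mu_{\widetilde{A}_N}\|_{\mathrm{TV}} \leq 2(\varphi(N) - p)/N \to 0$, exactly as in the proof of Proposition~\ref{Qseq}; the finite spike set is $\{\theta_1, \dots, \theta_p\}$; and the remaining eigenvalues (the original non-spikes together with the $\supp(\mu)$-replacements) still converge uniformly to $\supp(\mu)$. The same analysis applies to $\widetilde{B}_N$.

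Setting $K'_{\mathrm{trunc}} = K \cup \bigcup_{i=1}^p \omega_1^{-1}(\{\theta_i\}) \cup \bigcup_{j=1}^q \omega_2^{-1}(\{\tau_j\}) \subset K'$, Theorem~\ref{bbcf}(1) applied to $\widetilde{X}_N$, conditional on the almost-sure event on which the random matrices $A_N, B_N$ satisfy their stated hypotheses, yields $\sigma(\widetilde{X}_N) \subset (K'_{\mathrm{trunc}})_{\varepsilon/2}$ for large $N$. The boundary-continuity step guarantees $K' \setminus K'_{\mathrm{trunc}} \subset K_\varepsilon$, so combined with the $2\delta$-Weyl bound and the choice $\delta \leq \varepsilon/4$ we obtain $\sigma(X_N) \subset K'_\varepsilon$, proving Part~1. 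For Part~2, I would additionally insist that $\delta$ be small enough that $\omega_1(\rho)$ and $\omega_2(\rho)$ (both fixed values outside their respective supports) are counted among the ``far'' spikes, so that $\rho \in K'_{\mathrm{trunc}}$ with the same multiplicities $k, \ell$ as in $K'$. Theorem~\ref{bbcf}(2) applied to $\widetilde{X}_N$ in a slightly shrunken window $(\rho - \varepsilon', \rho + \varepsilon')$ with $\varepsilon' + 2\delta < \varepsilon$ then delivers exactly $k + \ell$ eigenvalues, and a two-sided Weyl comparison, combined with Part~1 just proved (to rule out eigenvalues drifting into $(\rho - \varepsilon, \rho + \varepsilon)$ from other outliers or from the bulk), transfers the count to $X_N$.

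The principal obstacle is the boundary-continuity step above: phrasing ``spikes approaching $\supp(\mu)$ generate outliers approaching $K$'' as a uniform statement. The qualitative version is immediate from the standard extension properties of the subordination maps, but some care is required at boundary points of $\supp(\mu)$, where $\omega_1$ may fail to be injective and $\omega_1^{-1}$ of such a point could in principle have multiple components inside $K$. Once this uniform continuity is in place, the remainder is a routine small-norm Weyl perturbation built on top of Theorem~\ref{bbcf}.
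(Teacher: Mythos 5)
Your proposal is correct in outline and takes a genuinely different route from the paper. The paper re-runs the Belinschi--Bercovici--Capitaine--F\'evrier machinery: it decomposes $A_N = A_N' + A_N''$ by replacing the \emph{far} spikes with a fixed point $\alpha \in \supp(\mu)$, keeping the near spikes inside $A_N'$; a pseudospectrum lemma (Theorem~\ref{conv}) is then proved so that $\sigma(A_N' + U_N B_N' U_N^*) \subset K_{\varepsilon + \delta_N}$, and the far spikes are tracked exactly via the $p\times p$ determinant factorization $\det(F_N)$, Lemma~\ref{bbcf5.1}, and the Hurwitz-type Lemma~\ref{4.5}. Your proposal does the \emph{dual} decomposition: replace the near spikes (a small-\emph{norm} but possibly large-rank perturbation, absorbed by Weyl), keep the far spikes, and black-box Theorem~\ref{bbcf} rather than re-deriving it. This is more modular and shorter, at the cost of relying on the BBCF theorem as a unit rather than on its method; either structure is legitimate, and the perturbation idea is the same in spirit as the paper's Theorem~\ref{conv}.

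One point should be corrected, since you flagged it as ``the principal obstacle'': the boundary-continuity claim (that $\dist(\theta,\supp(\mu))<\delta$ forces $\omega_1^{-1}(\{\theta\})\subset K_\varepsilon$) is in fact \emph{not needed} anywhere in your argument. For Part~1, the only inclusion you use is $K'_{\mathrm{trunc}} \subset K'$, which is immediate from the definitions: Theorem~\ref{bbcf}(1) gives $\sigma(\widetilde X_N)\subset (K'_{\mathrm{trunc}})_{\varepsilon/2}$, Weyl gives $\sigma(X_N)\subset (K'_{\mathrm{trunc}})_{\varepsilon/2+2\delta}$, and $(K'_{\mathrm{trunc}})_{\varepsilon/2+2\delta}\subset K'_\varepsilon$ once $\delta\le \varepsilon/4$. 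No comparison of $K'\setminus K'_{\mathrm{trunc}}$ with $K_\varepsilon$ is required. For Part~2, since $\rho$ is a fixed point with $(\rho-2\varepsilon,\rho+2\varepsilon)\cap K'=\{\rho\}$ and $K'_{\mathrm{trunc}}\subset K'$, the gap around $\rho$ needed for the two-sided Weyl count already comes from Theorem~\ref{bbcf}(1) applied to $\widetilde X_N$ with a small enough neighborhood, combined with $\mathrm{small}+2\delta<\varepsilon$; again the near-spike preimages never enter. Once this is recognized, the ``principal obstacle'' disappears and the argument is a routine, if mildly fussy, Weyl bookkeeping on top of Theorem~\ref{bbcf}. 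You should also make explicit the Fubini reduction to deterministic $A_N,B_N$ (as the paper does) before invoking Theorem~\ref{bbcf}, since that theorem is stated for deterministic sequences.
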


In a similar result, using the concept of cyclic monotone independence,  Collins, Hasebe, and Sakuma address the special case when $A_N$ and $B_N$ are made entirely of spikes with the property that the spikes are selected from a sequence that converges to zero.


\section{Preliminary Results}
	
Before proving Theorem \ref{main}, we establish a preliminary result regarding the convergence of the empirical spectral distribution of a sum of random Hermitian matrices. 

\begin{theorem}\label{conv}
Let $A_N$ and $B_N$ be $N \times N$ independent Hermitian random matrices such that the laws of $A_N$ and $B_N$ are unitarily invariant. Let $\mu$ and $\nu$ be compactly supported measures on $\R$ such that almost surely $\mu_{A_N} \lra \mu$ and almost surely $\mu_{B_N} \lra \nu$. Let $\varepsilon > 0$ be given. Suppose there exists an $N_0 \in \N$ large enough such that,  for $N \geq N_0$, we have both 
\[\sigma(A_N) \subseteq \text{\emph{Supp}}(\mu)_{\varepsilon}\]
\[ \sigma(B_N) \subseteq \text{\emph{Supp}}(\nu)_{\varepsilon}.\]
Then, for any $\eta > 0$, there exists an $N_1\in \N$ such that for all $N \geq N_1$, we have $\sigma(A_N+B_N) \subseteq \text{Supp}(\mu\boxplus\nu)_{2\varepsilon + \eta}$.
\end{theorem}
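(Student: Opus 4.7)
The plan is to reduce Theorem~\ref{conv} to the Collins--Male result (Theorem~\ref{cm}) via a spectral projection combined with Weyl's inequality. The hypothesis only gives $\sigma(A_N) \subseteq \supp(\mu)_\varepsilon$ for a fixed $\varepsilon$, whereas Theorem~\ref{cm} demands eigenvalues in \emph{every} neighborhood of $\supp(\mu)$; I therefore replace $A_N$ and $B_N$ by auxiliary matrices with spectra contained exactly in $\supp(\mu)$ and $\supp(\nu)$, pay $\varepsilon$ per matrix in operator norm to do so, and then invoke Theorem~\ref{cm} for the auxiliary pair.

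Fix a Borel map $\pi_\mu : \R \to \supp(\mu)$ sending each $x$ to a nearest point of $\supp(\mu)$ (picking any measurable tie-breaking rule at the at-most-countably-many gap midpoints), and set $\widetilde{A}_N := \pi_\mu(A_N)$ via functional calculus; define $\widetilde{B}_N := \pi_\nu(B_N)$ analogously. The following properties are then essentially immediate:
\begin{enumerate}[$1$.]
\item $\sigma(\widetilde{A}_N) \subseteq \supp(\mu)$ by construction, so condition~(3) of Theorem~\ref{cm} holds trivially for $\widetilde{A}_N$; likewise for $\widetilde{B}_N$.
\item For $N \geq N_0$, $\|A_N - \widetilde{A}_N\| \leq \varepsilon$ because $\sigma(A_N) \subseteq \supp(\mu)_\varepsilon$ and $|\pi_\mu(x) - x| = \dist(x, \supp(\mu))$; analogously $\|B_N - \widetilde{B}_N\| \leq \varepsilon$.
\item Because $\widetilde{A}_N$ is obtained from $A_N$ via functional calculus, it inherits unitary invariance, and it remains independent of $\widetilde{B}_N$.
\item Since $\pi_\mu$ is the identity on $\supp(\mu)$, all of its discontinuities lie in $\R \setminus \supp(\mu)$ (the at-most-countably-many gap midpoints), hence form a $\mu$-null set. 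The continuous mapping theorem applied to $\mu_{\widetilde{A}_N} = (\pi_\mu)_* \mu_{A_N}$ then gives $\mu_{\widetilde{A}_N} \to (\pi_\mu)_* \mu = \mu$ weakly, almost surely.
\end{enumerate}

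With these in hand, Theorem~\ref{cm} applied to $(\widetilde{A}_N, \widetilde{B}_N)$ produces, almost surely, an $N_1$ such that $\sigma(\widetilde{A}_N + \widetilde{B}_N) \subseteq \supp(\mu \boxplus \nu)_\eta$ for all $N \geq N_1$. Weyl's inequality then gives
\[
\max_i \bigl|\lambda_i(A_N + B_N) - \lambda_i(\widetilde{A}_N + \widetilde{B}_N)\bigr| \leq \|A_N - \widetilde{A}_N\| + \|B_N - \widetilde{B}_N\| \leq 2\varepsilon,
\]
whence $\sigma(A_N + B_N) \subseteq \supp(\mu \boxplus \nu)_{2\varepsilon + \eta}$, as required. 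The main obstacle is step~(4): a priori, spectrally projecting could shift the weak limit of $\mu_{\widetilde{A}_N}$ away from $\mu$, but the combination $\pi_\mu|_{\supp(\mu)} = \mathrm{id}$ together with the $\mu$-null-set location of the discontinuities of $\pi_\mu$ ensures the limit is genuinely preserved.
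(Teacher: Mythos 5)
Your proposal is correct and takes essentially the same approach as the paper's proof: snap the outlying eigenvalues to the nearest point of the limiting support (you do this via functional calculus with a nearest-point map $\pi_\mu$; the paper does it via an explicit diagonal decomposition $A_N = A_N' + A_N''$ after using unitary invariance to diagonalize), apply Theorem~\ref{cm} to the snapped matrices, and then absorb a $2\varepsilon$ operator-norm perturbation (you via Weyl's inequality; the paper via its pseudospectrum lemma, Proposition~\ref{pseudo}). If anything you are slightly more careful, since you explicitly verify, via the continuous-mapping theorem applied to the $\mu$-a.e.\ continuous map $\pi_\mu$, that the snapped empirical measures $\mu_{\widetilde{A}_N}$ still converge weakly to $\mu$ --- a hypothesis of Theorem~\ref{cm} that the paper's proof uses without comment.
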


\begin{proof}
Let $\varepsilon > 0$ be such that there exists an $N_0$ such that for all $N \geq N_0$, we have $\sigma(A_N) \subseteq \text{Supp}(\mu)_{\varepsilon}$, and $\sigma(B_N) \subseteq \text{Supp}(\nu)_{\varepsilon}$. For such $N$'s, let $\theta_1^{(N)}, \dots, \theta_{k(N)}^{(N)}$ be all the points that are in $\sigma(A_N)\backslash \text{Supp}(\mu)$, and for $B_N$, let $\tau_1^{(N)}, \dots, \tau_{l(N)}^{(N)} $ be all the points that are in $\sigma(B_N)\backslash \text{Supp}(\nu)$. Since we have that the distributions are unitarily invariant, and we can reduce to the case where the matrices $A_N$ and $B_N$ have the form 
\[A_N = \displaystyle \text{diag}\left(\theta_1^{(N)}, \dots, \theta_{k(N)}^{(N)}, \alpha_1^{(N)}, \dots, \alpha_{N-k(N)}^{(N)}\right)\]
\[B_N = \displaystyle \text{diag}\left(\tau_1^{(N)}, \dots, \tau_{l(N)}^{(N)}, \beta_1^{(N)}, \dots, \beta_{N-l(N)}^{(N)}\right).\]

For the remainder of the proof we drop the superscript $(N)$ for convenience. Consider the numbers 
$a_j$ to be an element in $\supp(\mu)$ such that the difference $|\theta_j - a_j|$ is a minimum. Similarly, consider the numbers $b_j$ to be an element in $\supp(\nu)$ such that the difference $|\tau_j - b_j|$ is a minimum. Define the matrices 

\[A_N' := \displaystyle \text{diag}\left(a_1, \dots, a_{k(N)}, \alpha_1, \dots, \alpha_{N-k(N)}\right)\]
\[B_N ':= \displaystyle \text{diag}\left( b_1, \dots, b_{l(N)}, \beta_1, \dots, \beta_{N-l(N)}\right)\]
and 

\[A_N'' := \displaystyle \text{diag}\left(\theta_1-a_1, \dots, \theta_{k(N)}-a_{k(N)}, 0, \dots, 0 \right)\]
\[B_N '':= \displaystyle \text{diag}\left(\tau_1 - b_1, \dots, \tau_{l(N)} - b_{l(N)}, 0, \dots, 0 \right).\]
Then by construction $A_N = A_N' + A_N''$ and $B_N = B_N' + B_N''$ and that since the $|\theta_j - a_j| \leq \varepsilon$ and $||A_N''||_{op} =  \displaystyle \max_{j \in \{1, \dots, k(N)\}} |\theta_j-a_j|$, we have $||A_N''||_{op} \leq \varepsilon$. Similary $||B_N''||_{op} \leq \varepsilon$, hence $||A_N'' + B_N''||_{op} \leq 2\varepsilon$. 

Since we have that for all $N\geq N_0$, both $\sigma(A_N') \subset \text{Supp}(\mu)$ and $ \sigma(B_N') \subset \text{Supp}(\nu)$, Theorem \ref{cm} tells us that for any $\eta >0$, there exists a $N_1 \in \N$ such that $\sigma(A_N'+B_N') \subseteq \text{Supp}(\mu\boxplus\nu)_{\eta}$ for all $N \geq N_1$.

%

Notice $\sigma(A_N + B_N) = \sigma(A_N' + B_N' + (A_N'' + B_N''))$, we are perturbing the matrix $A_N' + B_N'$ by $A_N'' + B_N''$, and $ || A_N'' + B_N''|| \leq 2\varepsilon$. The $\varepsilon$-pseudospectrum gives a nice description of what can happen to the spectrum under such a perturbation. 

\begin{definition}
The \textit{pseudeospectrum} (more specifically the \textit{$\varepsilon$-pseudospectrum}) of a square matrix $A$ is defined as
\[\sigma_{\varepsilon}(A) = \{ \lambda \in \C ~|~ A-\lambda I ~\text{ has least singular value at most } \varepsilon \}.\]
\end{definition}

The following are two equivalent definitions for pseudospectrum. 
\[\sigma_{\varepsilon}(A) = \{\lambda \in \C ~|~ \exists \text{ unit vector } v \text{ s.t. } |(A-\lambda I)v| \leq \varepsilon \}\]
\[\sigma_{\varepsilon}(A) = \displaystyle \sigma(A) \cup \left\{\lambda \in \C ~|~ ||(A-\lambda I)\iv|| > \frac{1}{\varepsilon}\right\}.\]

The $\varepsilon$-pseudospectrum $\sigma_{\varepsilon}(A)$ describes how $\sigma(A)$ can change under small pertubations, that is,  pertubations of type $A+B$ where $B$ has norm at most $\epsilon$. 

\begin{prop}\label{pseudo}
We have the following properties of the pseudospectrum: 
\begin{enumerate}
\item $\lambda \in \sigma_{\varepsilon}(A)$ if and only if $\lambda \in \sigma(A+B)$ where for some $B$ with $||B|| \leq \varepsilon$.
\item $\sigma_{\varepsilon}(A)$ contains the $\varepsilon$-neighborhood of $\sigma(A)$.
\item If $A$ is normal, then $\sigma_{\varepsilon}(A)$ is exactly the $\varepsilon$-neighborhood of $\sigma(A)$. 
\end{enumerate}
\end{prop}

The proof of this proposition uses straightforward techniques from linear algebra. More about the pseudospectrum can be found in \cite{hogben2006handbook}. 

We can now conclude the proof of Theorem \ref{conv}. Recalling back to our situation, we see that we are perturbing $A_N'+B_N'$ by the matrix $A_N'' + B_N''$ which has operator norm at most $2 \varepsilon$. Hence, for all $N \geq \max\{ N_0,N_1\}$ we have

\begin{align*}
\sigma(A_N+B_N) &= \sigma(A_N'+B_N' +(A_N'' + B_N'')) &\\
&\subseteq \sigma_{2\varepsilon}(A_N' + B_N')  &\text{by Proposition \ref{pseudo} (1)}\\
&= \sigma(A_N' + B_N')_{2\varepsilon}  &\text{by Proposition \ref{pseudo} (3)}\\
&\subseteq \text{Supp}(\mu \boxplus \nu)_{2\varepsilon+\eta}  &\text{by Theorem \ref{cm}}.
\end{align*}
\end{proof}

Notice that the result of Theorem \ref{conv} can be simplified to \textit{there exists an $N_1\in \N$ such that for all $N \geq N_1$, we have $\sigma(A_N+B_N) \subseteq \text{Supp}(\mu\boxplus\nu)_{3\varepsilon }$}, since we can set $\eta = \varepsilon$. Section 5 contains the statements and proofs for a multiplicative version of  Theorem \ref{conv} for measures on $\R_{>0}$ and $\T$.

~\\
In an effort to make the paper self-contained, we present a number of lemmas which come from \cite{belinschi2017outliers}. We offer proofs where convenient.

\begin{lemma}\label{4.5}
Let $\gamma = \R$ or $\mathbb{T}$, and let $K \subsetneq \gamma$  be compact, and let $p$ be a positive integer. Consider an analytic function $F:\overline{C}\bs K \to M_p(\C)$ such that $F(z)$ is diagonal for each $z \in \C \bs K$, $F(\infty) = I_p$, and $z \mapsto (F(z))_{ii} \in \C$ has only simple zeros, all of which are contained in $\gamma \bs K$, $1 \leq i \leq p$. Fix $\delta >0$ such that $\det(F)$ has no zeros on the boundary of $K_\delta$ relative to $\R$, and let $\rho_1, \dots, \rho_s \in \gamma$ be a list of those points $z\in \C \bs K_\delta$ for which $F(z)$ is not invertible. 

Suppose that there exists positive numbers $\{\delta_N\}_{N\in \N}$ and analytic maps $F_N: \overline{C}\bs K_{\delta_N} \to M_p(\C)$, $N \in \N$, such that:
\begin{enumerate}[\indent $1$.]
\item $\lim_{N \to \infty} \delta_N = 0$;
\item $F_N(z)$ is invertible for $z \in \C \bs \gamma$ and $N \in \N$; and 
\item $F_N$ converges to $F$ uniformly on compact sets of $\C\bs K$.
\end{enumerate}
Then,
\begin{enumerate}[\indent i.]
\item $\dim(\ker(F(\rho_j)))$ equals the order of $\rho_j$ as a zero of $z \mapsto \det(F(z))$;
\item Given $\varepsilon >0$ such that
\[\varepsilon < \frac{1}{2} \min\{|\rho_i - \rho_j|, \dist(\rho_i, K_\delta): 1 \leq i \neq j \leq s\},\]
there exists an integer $N_0$ such that for $N \geq N_0$, we have;
\begin{enumerate}[\indent {$-$}]
\item counting multiplicities, $\det(F_N)$ has exactly $\dim(\ker(F(\rho_j)))$ zeros in $(\rho_j-\varepsilon, \rho_j + \varepsilon) \subseteq \gamma$, $j = 1, \dots, s$, and 
\item $\{z \in \C \bs K_{\delta} : ~ \det(F_N(z)) = 0\} \subset \bigcup_{j=1}^{s} (\rho_j - \varepsilon, \rho_j+\varepsilon)$. 
\end{enumerate}
\end{enumerate}
\end{lemma}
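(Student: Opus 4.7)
The plan is to handle (i) by a direct computation from the diagonal form of $F$, and to handle (ii) by combining Hurwitz's theorem (equivalently, the argument principle) applied to $\det F_N \to \det F$ on small circles around each $\rho_j$, with a global compactness argument that forces every zero of $\det F_N$ in $\overline{\C}\setminus K_\delta$ to cluster near some $\rho_j$. For (i), write $F(z) = \text{diag}(f_1(z),\ldots,f_p(z))$, so $\det F = \prod_{i=1}^p f_i$ and the order of $\rho_j$ as a zero of $\det F$ is $\sum_i \operatorname{ord}_{\rho_j}(f_i)$. Since each $f_i$ has only simple zeros, $\operatorname{ord}_{\rho_j}(f_i)\in\{0,1\}$, so this sum equals $\#\{i:f_i(\rho_j)=0\}$, which coincides with $\dim\ker F(\rho_j)$ because $F(\rho_j)$ is diagonal.

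For (ii), taking determinants preserves locally uniform convergence, so $\det F_N \to \det F$ uniformly on compact subsets of $\overline{\C}\setminus K$. Given $\varepsilon>0$ as in the statement, the set
\[
E := \overline{\C} \setminus \Bigl( K_\delta \cup \bigcup_{j=1}^s B(\rho_j,\varepsilon) \Bigr)
\]
is a compact subset of $\overline{\C}\setminus K$ on which $\det F$ is nonvanishing and hence bounded away from $0$. Uniform convergence then yields $\det F_N\neq 0$ on $E$ for all $N\geq N_0$. For such $N$, every zero of $\det F_N$ in $\overline{\C}\setminus K_\delta$ lies in some $B(\rho_j,\varepsilon)$; hypothesis (2) further forces these zeros to lie on $\gamma$, placing them in $(\rho_j-\varepsilon,\rho_j+\varepsilon)$. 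Applying the argument principle on $\partial B(\rho_j,\varepsilon)$, where $\det F_N$ is uniformly close to $\det F$ and both are bounded away from $0$, the number of zeros of $\det F_N$ inside $B(\rho_j,\varepsilon)$ (with multiplicity) equals the number of zeros of $\det F$ there, which by (i) is $\dim\ker F(\rho_j)$.

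The main subtlety is confining zeros ``at infinity'' when $\gamma=\R$, since then $\C\setminus K_\delta$ is unbounded and naive compactness arguments fail. This is resolved by working throughout on the Riemann sphere $\overline{\C}$: the assumption $F(\infty)=I_p$ combined with locally uniform convergence on $\overline{\C}\setminus K$ (which in the spherical topology covers a compact neighborhood of $\infty$) gives $\det F_N(\infty)\to 1$ and uniform nonvanishing of $\det F_N$ on $\{|z|\geq M\}$ for $M$ and $N$ large, so no zeros of $\det F_N$ can escape to infinity. A minor bookkeeping point, secured by $\delta_N\to 0$, is that $F_N$ is indeed defined on all of $\overline{\C}\setminus K_\delta$ once $N$ is large enough for the above argument to take effect.
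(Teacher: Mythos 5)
Your proof is correct and uses essentially the same strategy as the paper: pass to determinants, invoke Hurwitz's theorem (equivalently the argument principle) to match zero counts of $\det F_N$ against $\det F$ near each $\rho_j$, and use hypothesis (2) to force zeros onto $\gamma$. You are more explicit than the paper about the diagonal computation in part (i) and about the compactification at infinity via $F(\infty)=I_p$, which the paper treats implicitly, but the underlying argument is the same.
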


\begin{proof}
Assertion $(i)$ is obvious. By assumption $(3)$ we have that $F_N$ converges to $F$ uniformly on compact sets of $\overline{\C} \bs K$, then it follows that the funcitons $f_N (z) = det(F_N(z))$ converge to $f(z) = det(F)$ on compact sets of $\overline{\C} \bs K$. Then by Hurwitz's theorem \cite{gamelin2003complex}, we have that for sufficiently large $N$, $f_n$ has exactly as many zeros as $f$ in $\overline{\C} \bs K$, counting multiplicites. Since we assumed that all the zeros of $f_N$ are assumed to be in $\gamma$ and therefore the zeros cluster around $\{\rho_1, \dots, \rho_s\}$ in the following sense: for any given $\varepsilon > 0$, there exists an $N_\varepsilon \in \N$ such that 
\[\{z \in \C \bs K_{\delta} ~:~ \det(F_N(z)) = 0\} \subset \bigcup_{j=1}^{s}(\rho_j - \varepsilon, \rho_j+\varepsilon )\]
when $N \geq N_{\varepsilon}$. When $\varepsilon >0$ is small enough, there are exactly $\dim(\ker(F(\rho_j)))$ zeros of $f_N$ in $(\rho_j-\varepsilon, \rho_j+\varepsilon)$, counting multiplicities. 
\end{proof}

For the next lemma we provide the following notation. If $X \in M_m(\C)$ is a normal matrix, we denote $E_X$ its spectral measure, and if $S \in \C$ is a Borel set, then $E_X(S)$ is the orthogonal projection onto the linear span of all eigenvectors of $X$ corresponding to eigenvalues in $S$. 

\begin{lemma}\label{4.10}
Let $X$ and $X_0$ be Hermitian $N \times N$ matrices. Assume that $\alpha, \beta, \delta \in \R$ are such that $\alpha < \beta$, $\delta >0$, and neither $X$ nor $X_0$ has any eigenvalues in $[\alpha-\delta, \alpha] \cup [\beta, \beta+\delta]$. Then
\[|| E_x((\alpha, \beta)) - E_{X_0}((\alpha, \beta))|| < \frac{4(\beta - \alpha + 2\delta)}{\pi \delta^2}||X-X_0||.\]
In particular, for any unit vector $\xi \in E_{X_0}((\alpha,\beta))(\C^N)$,
\[|| I_N - E_{X}((\alpha, \beta))\xi|| < \frac{4(\beta - \alpha + 2\delta)}{\pi \delta^2}||X-X_0||.\]
\end{lemma}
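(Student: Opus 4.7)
The plan is to invoke the Riesz holomorphic functional calculus representation of the spectral projection. Since $X$ is Hermitian, for any positively oriented simple closed contour $\Gamma \subset \C$ that encloses exactly the eigenvalues of $X$ in $(\alpha, \beta)$ and no others, one has
\[ E_X((\alpha,\beta)) \;=\; \frac{1}{2\pi i}\oint_\Gamma (zI_N - X)^{-1}\, dz. \]
The gap hypothesis ensures that the eigenvalues of both $X$ and $X_0$ lie in $(-\infty,\alpha-\delta)\cup(\alpha,\beta)\cup(\beta+\delta,\infty)$, so a single contour can serve both matrices, and the same representation holds with $X$ replaced by $X_0$.

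I would take $\Gamma$ to be the boundary of the axis-aligned rectangle with vertices $(\alpha - \delta/2) \pm i(\delta/2)$ and $(\beta + \delta/2) \pm i(\delta/2)$. A side-by-side inspection of the four edges shows that every $z \in \Gamma$ lies at distance at least $\delta/2$ from the real axis or from both $\alpha$ and $\beta$, and in combination with the spectral gaps this yields $\dist(z,\sigma(X)) \geq \delta/2$ and $\dist(z,\sigma(X_0)) \geq \delta/2$ on all of $\Gamma$. Normality of $X$ and $X_0$ then gives the resolvent bound $\|(zI_N - X)^{-1}\| = 1/\dist(z, \sigma(X)) \leq 2/\delta$, and likewise for $X_0$.

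Subtracting the two Riesz integrals and applying the resolvent identity
\[ (zI_N - X)^{-1} - (zI_N - X_0)^{-1} \;=\; (zI_N - X)^{-1}(X - X_0)(zI_N - X_0)^{-1} \]
yields
\[ E_X((\alpha,\beta)) - E_{X_0}((\alpha,\beta)) \;=\; \frac{1}{2\pi i}\oint_\Gamma (zI_N - X)^{-1}(X - X_0)(zI_N - X_0)^{-1}\, dz. \]
Taking operator norms, applying submultiplicativity, and using the perimeter of the rectangle $\mathrm{length}(\Gamma) = 2(\beta-\alpha+\delta) + 2\delta = 2(\beta - \alpha + 2\delta)$ gives the $ML$-estimate
\[ \|E_X((\alpha,\beta)) - E_{X_0}((\alpha,\beta))\| \;\leq\; \frac{1}{2\pi}\cdot \frac{2}{\delta}\cdot\frac{2}{\delta}\cdot 2(\beta-\alpha+2\delta)\cdot \|X - X_0\|, \]
which collapses to the desired constant $4(\beta - \alpha + 2\delta)/(\pi\delta^2)$.

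The second assertion is a one-line consequence: for a unit vector $\xi$ in the range of $E_{X_0}((\alpha,\beta))$ one has $E_{X_0}((\alpha,\beta))\xi = \xi$, so $(I_N - E_X((\alpha,\beta)))\xi = (E_{X_0}((\alpha,\beta)) - E_X((\alpha,\beta)))\xi$, and the operator-norm bound just proved transfers to $\xi$. The only nontrivial step in the whole argument is the geometric verification that the chosen rectangle keeps uniform distance $\delta/2$ from both spectra while still enclosing exactly the eigenvalues in $(\alpha,\beta)$; once that is in place the constants are forced. I do not anticipate a deeper obstacle, since the bound is essentially dictated by the width, height, and resolvent norm on the rectangle.
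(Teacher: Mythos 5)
Your proof is correct and follows essentially the same route as the paper's: representing both spectral projections by a Riesz contour integral over a rectangle of half-height $\delta/2$ straddling $[\alpha,\beta]$, applying the resolvent identity, bounding the resolvent norms by $2/\delta$ via normality and the spectral gap, and concluding with the ML-estimate over the perimeter $2(\beta-\alpha+2\delta)$. (In fact your description of the rectangle's vertices is cleaner than the paper's, which contains an evident typo in stating the corners.)
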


\begin{proof}
Let $\gamma$ be the rectangular path in $\C$ with corners at the points $(\alpha - 1) \pm \frac{\delta}{2}i$ and $(\beta - 1) \pm \frac{\delta}{2}i$. By assumption, we have $\sigma(X) \cap ([\alpha-\delta, \alpha] \cup [\beta, \beta+\delta]) = \emptyset$ and $\sigma(X_0) \cap ([\alpha-\delta, \alpha] \cup [\beta, \beta+\delta]) = \emptyset$. Thus we can obtain the spectral projections $E_X(\alpha,\beta)$ and $E_{X_0}(\alpha,\beta)$ by the analytic functional calculus:
\[E_X((\alpha, \beta)) - E_{X_0}((\alpha, \beta)) = \frac{1}{2\pi} \int_{\gamma} [(\lambda - X)\iv - (\lambda - X_0)\iv] d\lambda.\]
Thus we have the following norm estimate
\begin{align*}
||E_X((\alpha, \beta)) - E_{X_0}((\alpha, \beta)) || & = \frac{1}{2\pi}\left| \left|    \int_{\gamma} (\lambda - X)\iv (X_0 - X)(\lambda - X_0)\iv d \lambda  \right|\right| \\
&\leq \int_{\gamma} ||(\lambda - X)\iv (X_0 - X)(\lambda - X_0)\iv|| d \lambda\\
&\leq (\beta - \alpha + 2 \delta)\frac{||X-X_0||}{\pi}\sup_{\lambda \in \gamma}\frac{1}{|| \lambda - X||}\sup_{\lambda \in \gamma}\frac{1}{|| \lambda - X_0||}\\
&<\frac{4(\beta-\alpha + 2\delta)}{\pi \delta^2}||X-X_0||.
\end{align*}
\end{proof}

\begin{lemma}\label{RandER}
Fix a positive integer $r$, a projection $P$ of rank $r$ and a scalar $z \in \C \bs \R$. Then 
\[\lim_{N\to \infty} ||PR_N(z)P^* - P\E[R_N(z)]P^*|| = 0, ~~\text{ almost surely. }\]
\end{lemma}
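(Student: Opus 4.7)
The plan is to combine concentration of Haar measure on the unitary group $U(N)$ with a Borel--Cantelli argument, working entry-by-entry on the compressed resolvent. I interpret $R_N(z) = (z - X_N)\iv$ where $X_N$ is the model under consideration, for instance $X_N = A_N + U_N B_N U_N^*$. Conditioning on a realization of the (random) matrices $A_N, B_N$ for which $\|A_N\|_{\text{op}}$ and $\|B_N\|_{\text{op}}$ are uniformly bounded in $N$---which happens almost surely, since the spikes $\{\theta_i\}, \{\tau_j\}$ accumulate to the compact sets $\supp(\mu), \supp(\nu)$ while the remaining eigenvalues converge uniformly to these supports---it suffices to view $R_N(z)$ as a function of the Haar unitary $U_N$ alone.

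For fixed unit vectors $v, w \in \C^N$, consider $f_{v,w}(U) := \langle v, (z - A_N - U B_N U^*)\iv w \rangle$ as a scalar-valued function on $U(N)$. The resolvent identity
\[
(z - X(U))\iv - (z - X(V))\iv = (z - X(U))\iv \bigl[X(V) - X(U)\bigr] (z - X(V))\iv,
\]
combined with $\|(z - X)\iv\|_{\text{op}} \leq 1/|\mathrm{Im}\,z|$ and the elementary bound $\|U B_N U^* - V B_N V^*\|_{\text{op}} \leq 2 \|B_N\|_{\text{op}} \|U - V\|_{\text{op}}$, shows that $f_{v,w}$ is Lipschitz with respect to the operator-norm (hence also the Hilbert--Schmidt) metric on $U(N)$, with Lipschitz constant $L := 2 \|B_N\|_{\text{op}} / |\mathrm{Im}\,z|^2$, uniformly bounded in $N$ under the conditioning above.

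I would then invoke the classical Gromov--Milman concentration inequality for Haar measure on $U(N)$: there exist universal constants $C, c > 0$ such that for any $L$-Lipschitz $g : U(N) \to \R$ (with respect to the HS metric),
\[
\Pro\bigl(|g(U_N) - \E g(U_N)| > t\bigr) \leq C \exp(-c N t^2 / L^2).
\]
Applied to the real and imaginary parts of $f_{v,w}$ in turn, this produces tail probabilities summable in $N$ for each fixed $t > 0$, so the Borel--Cantelli lemma gives $f_{v,w}(U_N) - \E f_{v,w}(U_N) \to 0$ almost surely.

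To finish, since $P$ has rank $r$ independent of $N$, pick an orthonormal basis $\{e_1, \dots, e_r\}$ of its range and bound
\[
\|P R_N(z) P^* - P \E[R_N(z)] P^*\| \leq \sum_{i,j=1}^r \bigl|\langle e_i, R_N(z) e_j \rangle - \E \langle e_i, R_N(z) e_j \rangle\bigr|,
\]
each term of which tends to $0$ almost surely by the previous step; a finite intersection of almost sure events remains almost sure. I expect the main technical point to be establishing uniform control of the Lipschitz constant $L$, which ultimately rests on the almost-sure boundedness of $\|B_N\|_{\text{op}}$ supplied by the spike-accumulation hypotheses of Theorem \ref{main}.
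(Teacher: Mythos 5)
Your proposal is correct and follows essentially the same route as the paper: reduce to scalar entries $k^*R_N(z)h$, establish their Lipschitz dependence on the Haar unitary with constant of order $\|B_N\|_{\mathrm{op}}/|\mathrm{Im}\,z|^2$, invoke concentration of measure on $U(N)$ (the paper cites Anderson--Guionnet--Zeitouni, Corollary 4.4.28, which plays the same role as the Gromov--Milman inequality you invoke), and conclude by Borel--Cantelli and a finite union over the $r^2$ entries.
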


\begin{proof}
The claim is equivalent to the statement that, given unit vectors $h,k \in \C^N$,
\begin{equation}\label{*}
\lim_{N\to \infty}k^*(R_N(z) - \E[R_N(z)])h = 0
\end{equation}
almost surely. The random variable $k^*R_N(z)h$ is a Lipschitz function on the unitary group U$(N)$ with Lipschitz constant $C/|\mathfrak{I}z|^2$. An application of \cite{anderson2010introduction}, Corollary 4.4.28, yields the inequality
\[\mathbb{P} \left( |k^*(R_N(z) - \E[R_N(z)])h| > \frac{\epsilon}{N^{\frac{1}{2}-\alpha}}\right) \leq 2\text{exp}(-CN^{2\alpha}|\mathfrak{I}z|^4\epsilon^2)\]
for any $\alpha \in (0, 1/2)$, and (\ref{*}) follows by an application of the Borel-Cantelli lemma. 
\end{proof}

\begin{lemma}\label{bbcf5.1}
Fix a positive integer $p$, and let $C_N$ and $D_N$ be deterministic real diagonal $N\times N$ matrices whose norms are uniformly bounded and such that the limits 
\[n_i = \lim_{N\to \infty} (C_N)_{ii}\]
exist for all $i = 1, 2, \dots, p$. Suppose that the empirical eigenvalue distributions of $C_N$ and $D_N$ converge weakly to $\mu$ and $\nu$, respectively. Then the resolvent
\[R_N(z) = (zI_N -C_N - U_NDU_N^*)\iv, ~~ z \in \C\bs \R,\]
satisfies
\[\lim_{N\to \infty} P_N\E[R_N(z)]P_N^* = \emph{\diag}\left( \frac{1}{\omega_1(z)-\eta_1}, \dots, \frac{1}{w_1(z)-\eta_p}\right).\]
\end{lemma}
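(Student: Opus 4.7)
The plan has three stages: establishing that $\E[R_N(z)]$ is already diagonal, deriving an operator-norm subordination ansatz for it, and passing to the limit in the first $p$ diagonal entries.

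\textbf{Step 1 (Diagonal structure).} For any diagonal unitary $V$, we have $V^*C_NV=C_N$ and $VU_N$ is again Haar-distributed, so $V^*\E[R_N(z)]V = \E[R_N(z)]$. Hence $\E[R_N(z)]$ commutes with every diagonal unitary and is itself a diagonal matrix. Consequently $P_N\E[R_N(z)]P_N^*$ is automatically diagonal with entries $g_i^{(N)}(z):=\E[(R_N(z))_{ii}]$ for $i=1,\dots,p$, and the task reduces to proving $g_i^{(N)}(z) \to 1/(\omega_1(z)-\eta_i)$.

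\textbf{Step 2 (Subordination ansatz).} Let $\omega_1^{(N)}$ be the scalar subordination function associated with the empirical measures $\mu_{C_N}$ and $\mu_{D_N}$. The core of the proof is the operator-norm estimate
\[
\bigl\|\E[R_N(z)] - (\omega_1^{(N)}(z) I_N - C_N)^{-1}\bigr\|_{\mathrm{op}} \to 0,
\]
uniformly on compact subsets of $\C\setminus\R$, as $N\to\infty$. I would derive this by applying the integration-by-parts formula (Weingarten calculus) for the Haar measure on $\mathrm{U}(N)$ to $\E[R_N(z)]$, which produces a Schwinger--Dyson-type self-consistent equation. Lipschitz concentration on $\mathrm{U}(N)$ replaces the random traces appearing in this equation by their deterministic means, and matching with the scalar fixed-point equation defining $\omega_1^{(N)}$ pins down the leading-order solution as the subordination ansatz, with error of order $O(1/N)$ in operator norm.

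\textbf{Step 3 (Passage to the limit).} Continuity of the subordination map with respect to weak convergence of the input measures yields $\omega_1^{(N)}(z)\to\omega_1(z)$ locally uniformly on $\C\setminus\R$. Combined with $(C_N)_{ii}\to\eta_i$ for $i\leq p$, the $(i,i)$ entry of the ansatz converges to $1/(\omega_1(z)-\eta_i)$, and the operator-norm bound from Step 2 transfers this to $g_i^{(N)}(z)$, completing the proof.

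\textbf{Main obstacle.} Step 2 is where the real work lies. The tracial limit $N^{-1}\mathrm{tr}(\E[R_N(z)])\to G_{\mu\boxplus\nu}(z) = G_\mu(\omega_1(z))$ follows from asymptotic freeness of $C_N$ and $U_ND_NU_N^*$, but this only fixes the average of the $g_i^{(N)}(z)$. The entry-wise information required for fixed coordinates demands the stronger operator-norm subordination identity, and the difficulty is in carrying out the Schwinger--Dyson derivation with uniform error control on compact subsets of $\C\setminus\R$.
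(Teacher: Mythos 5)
The paper itself does not prove Lemma \ref{bbcf5.1}: it is stated without proof and imported from \cite{belinschi2017outliers} (where it is Lemma 5.1), so there is no in-paper argument to compare against. Your outline does follow the route used there and in the broader deterministic-equivalent literature. Step 1 is correct and clean: since $V C_N V^* = C_N$ for any diagonal unitary $V$ and $VU_N$ has the same law as $U_N$ by left-invariance of Haar measure, $\E[R_N(z)]$ commutes with all diagonal unitaries and is therefore diagonal, so the problem reduces to the first $p$ diagonal entries. Step 3 is also fine granted Step 2: continuity of the subordination map under weak convergence of compactly supported inputs gives $\omega_1^{(N)}\to\omega_1$ locally uniformly on $\C\setminus\R$, and $(C_N)_{ii}\to\eta_i$ then transfers the operator-norm bound to the individual entries.

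The gap is that Step 2 is a plan, not a proof, and Step 2 is where the entire content of the lemma lives. The assertion
$\bigl\|\E[R_N(z)] - (\omega_1^{(N)}(z)I_N - C_N)^{-1}\bigr\|_{\mathrm{op}}\to 0$, uniformly on compacts of $\C\setminus\R$, is exactly the finite-$N$ operator-norm subordination theorem for the model $C_N + U_N D_N U_N^*$. You name the ingredients (Weingarten integration by parts to get a Schwinger--Dyson self-consistent equation, Lipschitz concentration to close the loop, matching against the scalar subordination fixed point) but carry out none of the derivation, and the casual $O(1/N)$ claim is unjustified as stated since the implicit constant degrades as $\Im z\to 0$ and must be controlled explicitly --- precisely the difficulty you yourself flag in the final paragraph. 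There is also a conflation worth resolving: the object produced by the Schwinger--Dyson equation is a finite-$N$ analytic approximate subordination function, which is not a priori the subordination function of $\mu_{C_N}\boxplus\mu_{D_N}$; identifying the two (or bounding their difference) is part of the work. As written the proposal is a road map, not a proof; to complete it you would need either to execute the Schwinger--Dyson analysis with explicit error control or to invoke the relevant operator-norm subordination result (e.g.\ from \cite{belinschi2017outliers} or Kargin's subordination paper) as a black box, as the present paper implicitly does.
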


\begin{lemma}\label{bbcf5.1-mult}
Fix a positive integer $p$, and let $C_N$ and $D_N$ be deterministic nonnegative diagonal $N \times N$ matrices with uniformly bounded norms such that, for all $i = 1, \dots, p$, $(C_N)_{ii} \neq 0$ and the limits
\[\eta_i = \lim_{N\to \infty} (C_N)_{ii}\]
exist. Suppose that the empirical eigenvalue distributions of $C_N$ and $D_N$ converge weakly to $\mu$ and $\nu$, respectively. Then the resolvent
\[R_N(z) = (zI_N - C_N^{1/2}U_ND_NU_N^*C_N^{1/2})^{-1}, ~~ z\in\C \bs \R\]
satisfies
\[\lim_{N\to \infty} P_N\E[zR_N(z)]P_N^* = \emph{\diag}\left( \frac{1}{1-\eta_1 \omega_1(z\iv)}, \dots, \frac{1}{1-\eta_p \omega_1(z\iv)}\right).\]
\end{lemma}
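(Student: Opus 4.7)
The plan is to mirror the proof of the additive Lemma~\ref{bbcf5.1}, replacing each step by its multiplicative analogue via the parameter substitution $w := z\iv$. The first observation is that, for $z \in \C \bs \R$,
\[zR_N(z) = (I_N - z\iv Y_N)\iv, \qquad Y_N := C_N^{1/2} U_N D_N U_N^* C_N^{1/2},\]
so the quantity to be estimated is the top-left $p \times p$ block of $(I_N - z\iv Y_N)\iv$. Decompose $\C^N = P_N\C^N \oplus (I_N - P_N)\C^N$, write $Y_N$ as the corresponding $2\times 2$ block matrix, and apply the Schur complement formula to $I_N - z\iv Y_N$. Because $C_N$ is diagonal, the square root $C_N^{1/2}$ respects this block decomposition, so one can pull out the $p \times p$ block $C_N^{(p)} := P_N C_N P_N^*$ on both sides; the resulting identity has the form
\[P_N(I_N - z\iv Y_N)\iv P_N^* = \bigl(I_p - C_N^{(p)} M_N(z)\bigr)\iv,\]
where $M_N(z)$ is an explicit $p\times p$ matrix built from the blocks of $U_N D_N U_N^*$ and the reduced resolvent $(I_{N-p} - z\iv Y_{22})\iv$ on the orthogonal complement.

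Next I would argue that $M_N(z) - \E[M_N(z)] \to 0$ almost surely. The entries of $M_N(z)$ are sesquilinear forms of the type $h^* R_N'(z) k$ in fixed low-rank vectors, so their deviations from the mean are controlled by exactly the Lipschitz-concentration argument on U$(N)$ used in Lemma~\ref{RandER}. Furthermore, since $U_N$ is Haar-distributed and $C_N, D_N$ are diagonal, the law of $M_N(z)$ is invariant under conjugation by any diagonal unitary acting on the first $p$ coordinates, which forces $\E[M_N(z)]$ to be a scalar multiple of $I_p$. Writing $\E[M_N(z)] = m_N(z)\, I_p$, these facts combine to give
\[P_N \E[zR_N(z)] P_N^* - \diag\!\left(\tfrac{1}{1 - (C_N)_{ii}\, m_N(z)}\right)_{i\leq p} \longrightarrow 0\]
almost surely, and since $(C_N)_{ii} \to \eta_i$, the proof reduces to identifying the limit of $m_N(z)$.

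The main obstacle is precisely this identification: one must show $\lim_N m_N(z) = \omega_1(z\iv)$. The natural route is to compute $m_N(z) = \frac{1}{p}\,\text{tr}\, \E[M_N(z)]$ in terms of the normalized trace of the reduced resolvent of $Y_{22}$ on $\C^{N-p}$, and then use the multiplicative version of Theorem~\ref{conv} (promised in Section~5) to argue that removing a rank-$p$ corner does not disturb the limiting spectral distribution, so that this reduced trace converges to $\psi_{\mu\boxtimes\nu}(z\iv)/z\iv$. The defining subordination identity $\psi_\mu \circ \omega_1 = \psi_{\mu\boxtimes\nu}$ then pins down the limit to $\omega_1(z\iv)$. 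The delicate point, where the multiplicative setting departs most visibly from the additive one, is the algebraic bookkeeping: the Schur complement introduces factors of $z\iv$ that must be rearranged using the companion identity relating $\omega_1$, $\omega_2$, and $\psi_{\mu\boxtimes\nu}$, so that the final expression collapses to the clean $\frac{1}{1 - \eta_i\omega_1(z\iv)}$ rather than a more complicated rational function of $\omega_1(z\iv)$.
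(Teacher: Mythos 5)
The paper does not actually prove Lemma~\ref{bbcf5.1-mult}: both it and the additive Lemma~\ref{bbcf5.1} are imported from \cite{belinschi2017outliers} and stated without proof (the paper says proofs are offered ``where convenient,'' and these two are not among them). There is therefore no internal argument to compare against. In \cite{belinschi2017outliers} the result rests on Biane's operator-valued subordination theorem for free multiplicative convolution --- for free $a,b$ in a tracial $W^*$-probability space, the conditional expectation of $(1-zab)\iv$ onto $W^*(a)$ is $(1-\omega_1(z)a)\iv$ --- combined with concentration estimates in the spirit of Lemma~\ref{RandER} and normal-families/continuity arguments in $z$.

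Your sketch has the right shape (the rewrite $zR_N(z) = (I_N - z\iv Y_N)\iv$, the Schur complement, the concentration step) but contains one slip and one genuine gap. The slip: invariance under conjugation by \emph{diagonal} $p\times p$ unitaries only forces the off-diagonal entries of $\E[M_N(z)]$ to vanish; to get equal diagonal entries you need the full $\mathrm{U}(p)$ symmetry. That symmetry does hold --- Haar measure on $\mathrm{U}(N)$ is invariant under $U_N \mapsto (V\oplus I_{N-p})U_N$, and once $C_{11}^{1/2}$ is pulled out of the corner your $M_N$, built only from the blocks of $W_N := U_ND_NU_N^*$ and the reduced resolvent, transforms as $M_N \mapsto VM_NV^*$ --- but you should invoke it. (Also, the exact Schur identity is $\bigl(I_p - C_{11}^{1/2}M_N C_{11}^{1/2}\bigr)\iv$ rather than $\bigl(I_p - C_N^{(p)}M_N\bigr)\iv$; once $M_N$ is asymptotically scalar the two agree, but as written your displayed identity is off by a similarity.) The genuine gap is the identification $\lim_N m_N(z) = \omega_1(z\iv)$, which you acknowledge is the crux. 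Your proposed route --- express $m_N$ via the normalized trace of the reduced resolvent on $\C^{N-p}$ and then invoke the multiplicative analogue of Theorem~\ref{conv} --- does not close it: $m_N$ involves cross terms of the form $\mathrm{tr}\bigl(W_{12}\,(I_{N-p}-z\iv Y_{22})\iv\, W_{21}\bigr)$, which encode exactly the correlation between corner and bulk that the subordination function is designed to capture, and Lemma~\ref{convMultPos} is a statement about supports (operator-norm perturbations), which says nothing about such traces. The missing engine is Biane's subordination theorem for $\boxtimes$ plus asymptotic freeness of $C_N$ and $U_ND_NU_N^*$, which is precisely what \cite{belinschi2017outliers} use; without it, the collapse to $1/(1-\eta_i\omega_1(z\iv))$ at the end is an assertion rather than a consequence.
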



\section{Proof of the Main Results}
	
Notice that it is sufficient to prove Theorem \ref{main} for deterministic matrices $A_N$ and $B_N$. If $A_N$ and $B_N$ are independent, then we may choose the underlying probability space $\Omega$ to be of the form $\Omega = \Omega_1 \times \Omega_2$ where $A_N$ is a measurable function on $\Omega_1$ and $B_N$ is a measurable function on $\Omega_2$. Let $\tilde{\Omega}$ be the event that (\ref{cond1}) and (\ref{cond2}) hold, where
\begin{align}
&\text{ $-$ there exists an $N_0 \in \N$ such that for all $N >N_0$, we have $\sigma(X_N) \subset K'_{\varepsilon}$ }; \label{cond1}\\
&\text{ $-$ there exists an $N_0 \in \N$ such that for all $N>N_0$,we have} \label{cond2}
\end{align}
\[\card(\{\sigma(X_N)\cap (\rho-\varepsilon,\rho +\varepsilon)\}) = k+\ell.\]
The event $\tilde{\Omega} \in \Omega$ is a measurable set. Denote a point in $\Omega_1 \times \Omega_2$ as $(w_1,w_2)$ (we use $w$'s in place of $\omega$'s in order to distinguish them from subordination functions).  Assume the theorem holds for deterministic matrices. Then for almost all $w_2 \in \Omega_2$, there exists a set $\tilde{\Omega}_1(w_2)$ such that for all $w_1 \in \tilde{\Omega}_1$, (\ref{cond1}) and (\ref{cond2}) hold for $X_N(w_1, w_2) = A_N(w_1) + U_N B_N(w_2)U_N^*$. The set of all such points $(w_1, w_2)$ has outer measure one and contained in $\tilde{\Omega}$, hence $\tilde{\Omega}$ has measure one by Fubini's theorem. 
$~$\\

Our proof largely mimics the proof in section 5 of \cite{belinschi2017outliers} with slight adjustments. Due to the left and right invariance of the Haar measure on $\text{U}(N)$, we may assume without loss of generality that both $A_N$ and $B_N$ are diagonal matrices. Let 
\[A_N = \displaystyle \diag\left(\theta_1,\dots, \theta_{\varphi(N)},\alpha_1^{(N)}, \dots, \alpha_{N-\varphi(N)}^{(N)}\right)\]
and
\[B_N = \displaystyle \diag\left(\tau_1,\dots, \tau_{\psi(N)},\beta_1^{(N)}, \dots, \beta_{N-\psi(N)}^{(N)}\right)\]
with $\alpha_1^{(N)} \geq \dots \geq \alpha_{N-\varphi(N)}^{(N)}$ and no order relations between the $\theta_j$'s and $\alpha_{j}^{(N)}$'s except $\theta_j \neq \alpha_j^{(N)}$, and similar order relations for the $\beta$'s and $\tau$'s.  
~\\

Let $\varepsilon > 0$ and, let $\theta_{i_1}, \dots, \theta_{i_p}$ be the $p$ elements of $\{\theta_i\}_{i=1}^{\infty}$ that lie outside $\supp(\mu)_{\varepsilon/2}$. Similarly, let $\tau_{j_1}, \dots, \tau_{j_q}$ be the $q$ elements of $\{\tau_j\}_{j=1}^{\infty}$ that lie outside $\supp(\nu)_{\varepsilon/2}$. We know that $p,q < \infty$ since dist$(\theta_i, \text{Supp}(\mu)) \to 0$ as $i \to \infty$ and dist$(\tau_j, \text{Supp}(\nu)) \to 0$ as $j \to \infty$.  

Let $M_0\in \N$ be large enough such that for all $N \geq M_0$, we have that $\sigma(A_N)\bs \supp(\mu)_{\varepsilon/2} = \{\theta_{i_1}, \dots, \theta_{i_p}\}$. For $N \geq M_0$, we may reorder the sequence $(\theta_i)_{i \in \N}$ to write 
\[A_N = \displaystyle \diag\left(\theta_1,\dots,\theta_p, \theta_{p+1}, \dots, \theta_{\varphi(N)},\alpha_1^{(N)}, \dots, \alpha_{N-\varphi(N)}^{(N)}\right)\]
where $\theta_1,\dots,\theta_p$ are precisely the $p$ elements of $\{\lambda_n(A_N)\}_{n=1}^{N}$ that eventually (as $N \to \infty$) lie outside $\supp(\mu)_{\varepsilon/2}$.
Let $\alpha \in \supp(\mu)$ and define the following
\[A_N' = \diag(\underbrace{\alpha,\dots,\alpha}_{p\text{-times}}, \theta_{p+1}, \dots, \theta_{\varphi(N)},\alpha_1^{(N)}, \dots, \alpha_{N-\varphi(N)}^{(N)})\]
and
\[A_N'' = \diag(\theta_1-\alpha,\dots,\theta_p-\alpha, \underbrace{0, \dots, 0}_{(N-p)\text{-times}})\]
Hence $A_N = A'_N + A''_N$, and $A''_N = P_N^* \Theta P_N$ where $P_N: \C^N \lra \C^p$ is the projection onto the first $p$ coordinates and $\Theta = \diag(\theta_1-\alpha,\dots,\theta_p-\alpha)$. 

Let $M_1 \in \N$ be large enough such that for all $N \geq M_1$ we have $\sigma(B_N) \bs \supp(\nu)_{\varepsilon/2} = \{\tau_{j_1}, \dots, \tau_{j_q}\}$. Like above, when $N \geq M_1$ we may reorder the sequence $(\tau_j)_{j\in \N}$ to write
\[B_N = \displaystyle \diag\left(\tau_1,\dots,\tau_q, \tau_{q+1}, \dots, \tau_{\psi(N)},\beta_1^{(N)}, \dots, \beta_{N-\psi(N)}^{(N)}\right)\]
where $\tau_1, \dots, \tau_q$ are precisely the $q$ elements of $\{\lambda_n(B_N)\}_{n=1}^{N}$ that eventually lie outside $\supp(\nu)_{\varepsilon/2}$. Let $\beta \in \supp(\nu)$ and define the following, 
\[B_N' = \diag(\underbrace{\beta,\dots,\beta}_{q\text{-times}}, \tau_{q+1}, \dots, \tau_{\psi(N)},\beta_1^{(N)}, \dots, \beta_{N-\psi(N)}^{(N)})\]
and
\[B_N'' = \diag(\tau_1-\beta,\dots,\tau_q-\beta, \underbrace{0, \dots, 0}_{(N-q)\text{-times}})\]
Thus we have $B_N = B_N' + B_N''$. We can also express $B_N''$ as $B_N'' = Q_N^*TQ_N$ where $Q_N: \C^N \lra \C^q$ is the projection onto the first $q$ coordinates, and $T = \diag(\tau_1-\beta, \dots, \tau_q-\beta)$. 

We follow the strategy of \cite{belinschi2017outliers} and reduce our problem to that of a $p \times p$ matrix. Define $X_N' = A_N' + U_NB_N'U_N^*$ and let $N_0 = \max\{M_0, M_1\}$. By construction of $N_0$, we have that $\sigma(A'_N) \subseteq \supp(\mu)_{\varepsilon/2}$ for all $N>N_{0}$, and $\sigma(B'_N) \subseteq \supp(\nu)_{\varepsilon/2}$ for all $N>N_{0}$. Hence by Theorem \ref{conv}, there exist positive random variables $(\delta_{N})_{N\in N}$ such that $\lim_{N\to \infty} \delta_N = 0$ almost surely, such that

\[\sigma(A_N' + U_NB'_NU_N^*) \subseteq (\supp(\mu \boxplus \nu)_{\varepsilon})_{\delta_N} = \supp(\mu \boxplus \nu)_{\varepsilon+\delta_N} = K_{\varepsilon+\delta_N}. \]

Let $z \in \C \bs K_{\varepsilon+\delta_N}$, we have
\begin{align*}
zI_N - (A_N + U_NB'_NU_N^*) &= zI_N - X_N' - A_N'' \\
&= (zI_N - X_n')(I_N - (zI_N - X_N')\iv A_N'')
\end{align*}
and therefore

\[\det(zI_N - (A_N + U_NB'_NU_N^*)) = \det(zI_N - X_N') \det(I_N - (zI_N - X_N')\iv P_N^*\Theta P_N).\]

Using the fact that $\det(I-XY) = \det(I - YX)$ when $XY$ and $YX$ are square matrices we get,
\[\det(I_N - (zI_N - X_N')\iv P_N^*\Theta P_N) = \det(I_p - P_N(zI_N - X_N')\iv P_N^*\Theta).\]
Thus
\[\det(zI_N - (A_N + U_NB'_NU_N^*)) = \det(zI_N - X_N') \det(I_p - P_N(zI_N - X_N')\iv P_N^*\Theta),\]
and hence we conclude that the eigenvalues of $A_N +U_NB'_NU_N^*$ outside $K_{\varepsilon+\delta_N}$ are precisely the zeros of the function $\det(F_N(z))$ where 

\begin{equation}\label{F_N}
F_N(z) = I_p -P_N(zI_N - X_N')\iv P_N^* \Theta 
\end{equation}
which is a random analytic function defined on $\overline{\C} \bs K_{\varepsilon+\delta_N}$ with values in $M_p(\C)$. Our next step is to show that $\{F_N(z)\}_N$ converges almost surely to the deterministic diagonal matrix function

\begin{equation}\label{F}
\displaystyle F(z) = \diag \left( 1 - \frac{\theta_1- \alpha}{\omega_1(z)-\alpha}, \dots, 1 - \frac{\theta_p- \alpha}{\omega_1(z)-\alpha} \right).
\end{equation}
$~$\\

Notice that $A_N$ and $B_N$ are deterministic real diagonal $N\times N$ matrices whose norms are uniformly bounded. Also, notice that the limits $\lim_{N\to \infty} (A'_N)_{ii}$ exists for all $i = 1, \dots, p$, in particular, $\lim_{N\to \infty} (A'_N)_{ii} = \alpha$. Hence we can apply Lemma \ref{bbcf5.1}, which says that for $z \in \C \bs \R$, the resolvent $R_N(z) = (zI_n - A'_N + U_NB_NU_N^*)\iv$ satisfies 

\begin{equation}\label{exp}
\displaystyle \lim_{N \to \infty} P_N \mathbb{E}[R_N(z)]P_N^* = \diag \left( \frac{1}{\omega_1(z) - \alpha}, \dots, \frac{1}{\omega_1(z) - \alpha}\right).
\end{equation}
$~$\\

\begin{prop}\label{FNtoF} 
Almost surely, the sequence $\{F_N\}_{N}$ converges uniformly on compact subsets of $\overline{\C} \bs K_{\varepsilon}$ to the analytic function $F$ defined by 
\[\displaystyle F(z) = \diag \left( 1 - \frac{\theta_1- \alpha}{\omega_1(z)-\alpha}, \dots, 1 - \frac{\theta_p- \alpha}{\omega_1(z)-\alpha} \right), ~~~~ z \in \overline{\C} \bs K_{\varepsilon}.\]

\end{prop}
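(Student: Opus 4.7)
The plan is to establish pointwise almost-sure convergence $F_N(z) \to F(z)$ at each fixed non-real $z$, and then upgrade to locally uniform convergence via a Vitali--Montel argument.

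For the pointwise step I would apply Lemma \ref{bbcf5.1} with $C_N = A_N'$ and $D_N = B_N'$. The first $p$ diagonal entries of $A_N'$ are all equal to $\alpha$, so $\eta_i = \alpha$ exists for $i = 1, \ldots, p$, producing the common denominator $\omega_1(z) - \alpha$ appearing in $F$. Uniform operator-norm bounds for $\{A_N'\}$ and $\{B_N'\}$ follow from compactness of $\supp(\mu)$ and $\supp(\nu)$ together with the fact that the spike sequences $(\theta_i)$ and $(\tau_j)$ are bounded (they accumulate to compact sets). Weak convergence $\mu_{A_N'} \to \mu$ (and similarly $\mu_{B_N'} \to \nu$) is where Proposition \ref{Qseq} is used: the hypothesis $\mu_{A_N} \to \mu$ already incorporates the growing family $\theta_{p+1}, \ldots, \theta_{\varphi(N)}$ via that proposition, and $A_N'$ differs from $A_N$ in only $p$ coordinates (a number independent of $N$), contributing an additional $O(1/N)$ to the total variation distance. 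Lemma \ref{bbcf5.1} then gives, for every $z \in \C \bs \R$,
\[\lim_{N \to \infty} P_N \E[R_N(z)] P_N^* = \diag\!\left(\frac{1}{\omega_1(z)-\alpha},\, \ldots,\, \frac{1}{\omega_1(z)-\alpha}\right),\]
where $R_N(z) = (zI_N - X_N')^{-1}$. Invoking Lemma \ref{RandER} with the rank-$p$ projection $P_N$ removes the expectation: almost surely, $P_N R_N(z) P_N^*$ converges to the same diagonal matrix. Right-multiplying by $\Theta$ and subtracting from $I_p$ yields the pointwise limit $F_N(z) \to F(z)$ almost surely at each fixed $z \in \C \bs \R$.

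To produce a single full-measure event on which pointwise convergence holds at every point of a dense set, I would intersect the exceptional nullsets above over a countable dense subset $\{z_k\} \subset \C \bs \R$. On this event, Theorem \ref{conv} provides random $\delta_N \to 0$ almost surely with $\sigma(X_N') \subseteq K_{\varepsilon + \delta_N}$. For any compact $L \subseteq \overline{\C} \bs K_\varepsilon$ sitting at positive distance $d$ from $\overline{K_\varepsilon}$, we eventually have $\dist(L, \sigma(X_N')) \geq d/2$, which in turn yields the uniform bound $\|F_N(z)\| \leq 1 + 2\|\Theta\|/d$ on $L$. By Montel's theorem the sequence $\{F_N\}$ is precompact in the topology of uniform convergence on compacts of the interior; every subsequential limit is analytic there and agrees with $F$ on the dense set $\{z_k\}$, hence equals $F$ by the identity principle. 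Thus $F_N \to F$ uniformly on each such $L$, and a standard exhaustion by such $L$ extends this to all compact subsets of $\overline{\C} \bs K_\varepsilon$.

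The principal obstacle is the opening verification that Lemma \ref{bbcf5.1} applies in the present situation despite the presence of the growing families of additional spikes $\theta_{p+1}, \ldots, \theta_{\varphi(N)}$ and $\tau_{q+1}, \ldots, \tau_{\psi(N)}$ sitting inside $A_N'$ and $B_N'$. This is precisely the scenario Proposition \ref{Qseq} was designed to handle, and it guarantees the weak convergence of the empirical spectral distributions to $\mu$ and $\nu$ despite the drift coming from these spikes. Once this is in place, the concentration-of-measure step via Lemma \ref{RandER} and the Montel/identity-principle extension to uniform convergence on compacts are comparatively routine.
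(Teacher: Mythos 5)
Your argument mirrors the paper's proof: pointwise almost-sure convergence of $P_N R_N(z) P_N^*$ to $(\omega_1(z)-\alpha)^{-1}I_p$ via Lemma \ref{bbcf5.1} combined with the concentration estimate of Lemma \ref{RandER}, intersection of the null events over a countable dense set $\mathcal{D}\subset\C\bs K_\varepsilon$, and then upgrading to locally uniform convergence via a uniform bound and a normal-families/identity-principle argument. You fill in a few details the paper states more tersely (the $O(1/N)$ total-variation perturbation justifying the weak-convergence hypothesis for $A_N'$, and the explicit resolvent bound on compacts via Theorem \ref{conv}, where the paper simply cites \cite{collins2014strong} for uniform boundedness), but the route is the same.
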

\begin{proof}
Recall that $\alpha \in \supp(\mu)$, and that by a property of the subordination function $\omega_1$, we have that if $x \in \R \bs \supp(\mu \boxplus \nu)$ then $\omega_1(x) \in (\R \cup \{\infty\}) \bs \supp(\mu)$, hence $\omega_1(z) \neq \alpha$ for any $z \in \overline{\C} \bs K_{\varepsilon}$ (Lemma 3.1 of \cite{belinschi2017outliers}). Therefore the function $z \mapsto 1/(\omega_1(z) - \alpha)$ is analytic on  $\overline{\C} \bs K_{\varepsilon}$. Define
\[\mathcal{D} = \{ z \in \C \bs K_{\varepsilon} : \mathfrak{R}z \in \Q, \mathfrak{I}z \in \Q \bs \{0\}\}\]

The first $p$ diagonal elements of $A_N'$ are all equal to $\alpha$. Lemma \ref{RandER} gives that 
\[\lim_{N \to \infty} || P_N R_N(z) P_N^* - P_N \mathbb{E}[R_N(z)]P_N^*|| = 0 ~~ \text{ almost surely}\]
and in combination with (\ref{exp}), we get that given $z \in \mathcal{D}$, the sequence $P_N(z I_N - X_N')\iv P_N^*$ converges almost surely to $(1/(\omega_1(z) - \alpha))I_p$.  By \cite{collins2014strong}, we have that the functions are almost surely uniformly bounded on any compact set of $\C \bs K_{\varepsilon}$.

It is clear that we have uniform boundedness on some neighborhood of infinity in $\C \cup \{\infty\}$. Since $\mathcal{D}$ is dense in $\overline{\C} \bs K_{\varepsilon}$, we deduce that, almost surely, this sequence of functions $P_N(z I_N - X_N')\iv P_N^*$, converges uniformly on compact sets of $\overline{\C} \bs K_{\varepsilon}$ to the function $(1/(\omega_1 - \alpha))I_p$. Thus, 
\begin{align*}
\lim_{N\to \infty} F_N(z) &= I_p - (1/(\omega_1 - \alpha))I_p\Theta\\
&= \diag(1,\dots,1) - \diag\left(\frac{\theta_1-\alpha}{\omega_1(z) -\alpha},\dots, \frac{\theta_p-\alpha}{\omega_1(z) -\alpha}\right) \\
&=\diag \left( 1 - \frac{\theta_1- \alpha}{\omega_1(z)-\alpha}, \dots, 1 - \frac{\theta_p- \alpha}{\omega_1(z)-\alpha} \right).
\end{align*}
\end{proof}

We are now equipped ourselves with the tools to prove Theorem \ref{main}. 

\begin{proof} \textit{(of Theorem \ref{main})} 

We first remark that our proof will follow a nearly identical procedure as that of Theorem 2.1 in \cite{belinschi2017outliers}. 
$~$\\

\textit{Step} 1. We first consider the case where $B_N$ has no spikes, that is, $q=0$ and $B_N = B_N'$. We work on the almost sure event on which:
\begin{enumerate}[\indent $-$]
\item there exists a random sequence $\{\delta_N\}_{N \in \N}$ such that $\lim_{N \to \infty} \delta_N = 0$ and $\sigma(A'_N + U_NB'_NU_N^*) \subseteq K_{\varepsilon + \delta_N}$ for all $N$, and 
\item the sequence $(F_N)_{N \in \N}$ defined in (\ref{F_N}) converges to the function $F$ defined by (\ref{F}) uniformly on compact sets of $\overline{\C} \bs K_{\varepsilon}$. This is guaranteed by Proposition \ref{FNtoF}.
\end{enumerate}

We apply Lemma \ref{4.5} on this event with $\gamma = \R$ and $K = K_{\varepsilon}$. We  want to keep the same result when we exchange $K = \supp(\mu \boxplus \nu)$ with $K_{\varepsilon}$ for small $\varepsilon$. For the case where $\gamma = \R$, notice that $K_{\varepsilon}\subsetneq \gamma$ is compact. Our function $F$ is an analytic function from $\overline{\C}\bs K_{\varepsilon} \lra M_p(\C)$ such that $F(z)$ is diagonal for each $z \in \C \bs K_{\varepsilon}$ and $F(\infty) = I_p$. Notice that the zeros of the map $z \mapsto (F(z))_{ii} \in \C$ is where $\omega_1(z) = \theta_i$, and all contained in $\gamma \bs K_{\varepsilon}, $ for $1 \leq i \leq p$. Hence, these assumptions are satisfied with $K_{\varepsilon}$ in place of $K$ and we can use this lemma freely, as long as the remaining conditions are met.  

To show that the zeros of $(F(z))_{ii}$ are simple, we use an application of the Julia-Carath\'eodory theorem (\cite{garnett2007bounded}, Chapter I, Exercises 6 and 7). Conditions (1) and (3) of Lemma \ref{4.5} are satisfied due to Proposition \ref{FNtoF}. For condition (2), we see that if $F_N(z)$ were not invertible then $\det(F_N(z)) = 0$. And since all zeros of $\det(F_N(z))$ are the eigenvalues of $A_N + U_NB'_NU_N^*$ which is a self-adjoint matrix, we see that $z$ must be real, and condition (2) is satisfied. Lastly, there are arbitrarily small numbers $\delta>0$ such that the boundary points of $K_{\varepsilon+\delta}$ are not zeros of $\det{(F)}$, thus all the conditions of Lemma \ref{4.5} are satisfied, the consequences of which provide precisely the results of Theorem \ref{main} for the case when $q = 0$. Namely, the eigenvalues of $A_N + U_NB'_NU_N^*$ in $K_{\epsilon + \delta}$ are precisely the zeros of $\det(F_N)$, and the set of points $z$ such that $F(z)$ is not invertible are $\bigcup_{i=1}^{p}\omega_1^{-1}(\{\theta_i\})$. 
$~$\\

\textit{Step} 2. Now we suppose that $q >0$ and $k = 0$. By step 1, we know that there exist random variables $(\delta_N)_{N\in \N}$ such that $\lim_{N\to \infty} \delta_N = 0$ almost surely and $\sigma(A_N + U_NB'_NU_N^*) \subseteq (K''_{\varepsilon})_{\delta_N}$ where 
\[K''_{\varepsilon} = K_\varepsilon \cup \left[ \bigcup_{i=1}^{\infty} \omega_1\iv(\{\theta_i\})\right].\]

We now switch the roles of $A_N$ and $B_N$ and proceed as in Step 1. With the reasoning as above, we see that the eigenvalues of $X_N = A_N +U_NB_NU_N^*$ outside of $(K''_{\varepsilon})_{\delta_N}$ are precisely the zeros of the function $\det(\tilde{F}_N(z))$, where 
\[\tilde{F}_N(z) = I_q - Q_N(zI_N - U_NA_NU_N^* - B'_N)\iv Q_N^*T.\]
We now apply Lemma \ref{4.5} to the functions $\tilde{F}_N$ and $\tilde{F}$, where 
\[\tilde{F}(z) = \diag \left( 1-\frac{\tau_1-\beta}{\omega_2(z) - \beta},\dots, 1-\frac{\tau_q-\beta}{\omega_2(z) - \beta} \right)\]
and the compact set $K''_{\varepsilon}$. We can use a simply modified version Proposition \ref{FNtoF} to conclude that $\{\tilde{F}_N\}_{N \in \N}$ converges to $\tilde{F}$. This concludes Step 2 and by symmetry we have also proved the case when $q>0$ and $l = 0$. 
$~$\\

\textit{Step} 3. Lastly, we consider the case when $q >0$ and $\ell \cdot k >0$, that is, both $k$ and $\ell$ cannot be zero. For this case, we us a perturbation argument and apply Lemma \ref{4.10}. Fix $\rho \in \R \bs K_{\varepsilon}$, such that $\omega_1(\rho) = \theta_{i_0}$ for some $i_0 \in \{1, \dots, p\}$ and $\omega_2(\rho) = \tau_{j_0}$ for some $j_0 \in \{1, \dots, q\}$. Let $\xi <0$ such that such that $(\rho - 2\xi, \rho + 2\xi) \cap K' = \{ \rho \}$. Choose $\delta \in (0, \xi/3)$ small enough that $\omega_1((\rho - 3\delta, \rho + 3\delta))$ contains no spikes $\theta_i = \theta_{i_0}$ and $\omega_2((\rho - 3\delta, \rho + 3\delta))$ contains no spikes $\tau_j = \tau_{i_0}$. Since $\omega_1$ is strictly increasing on $(\rho - 3\delta, \rho + 3\delta)$, we have $\omega_1(\rho + 2 \delta) = \theta_{i_0} + \eta$ with $\eta >0$. If we consider the perturbed model 
\[X_N' = X_N + \eta E_{A_N}(\{\theta_{i_0}\}),\]
then we can use step 2 to conclude that, almost surely for large $N$, $X_N'$ has $\ell$ eigenvalues in $(\rho - \delta, \rho + \delta)$ and $k$ eigenvalues in the disjoint interval $(\rho+\delta, \rho +3\delta)$. Thus neither $X_N$ nor $X_N'$ have eigenvalues in the interval set $[(\rho-\delta)- (\xi - 3\delta), (\rho - \delta)]\cup[(\rho + 3\delta), (\rho + 3\delta)+(\xi -3 \delta)]$. Apply Lemma \ref{4.10} on $X_N$ and $X_N'$ with respect to this set and we get
\begin{align*}
|| E_{X_N}((\rho - \delta,\rho + 3\delta)) - E_{X_N'}((\rho - \delta,\rho + 3\delta))||  &< \frac{4((\rho - 3\delta) - (\rho - \delta) + 2(\xi - 3\delta))}{\pi(\xi - 3\delta)^2}||X_N - X_N'||\\
&<\frac{8(\xi - \delta)}{\pi(\xi - 3\delta)^2}\eta.
\end{align*}
Notice that as $\delta \to 0$, we get that $\eta \to 0$, hence
\[\lim_{\delta \to 0} || E_{X_N}((\rho - \delta,\rho + 3\delta)) - E_{X_N'}((\rho - \delta,\rho + 3\delta))|| = 0.\]
Thus, since $X_N'$ has $k+\ell$ eigenvalues in $(\rho - \delta, \rho +3\delta)$, we have that $X_N$ has $k + \ell $ eigenvalues in $(\rho - \xi, \rho +\xi)$.

\end{proof}


\section{Multiplicative Cases}
	Similar results hold for free multiplicative convolution both on the positive real line and on the unit circle. We first multiplicative model we consider is $X_N = A_N^{1/2}U_NB_NU_N^*A_N^{1/2}$, where $\mu_{A_N}$ converges weakly to a measure $\mu$ such that $\supp(\mu) \in [0,\infty)$, and $\mu_{B_N}$ converges weakly to a measure $\nu$ such that $\supp(\mu) \in [0,\infty)$. We know from \cite{voiculescu1991limit} that the empirical eigenvalue distribution $\mu_{X_N}$ converges weakly to $\mu \boxtimes \nu$. 

Before we state and prove a multiplicative analogue of Theorem \ref{main}, we prove a multiplicative analogue to Theorem \ref{conv}. 
\begin{lemma}\label{convMultPos}
Let $A_N$ and $B_N$ are $N \times N$ independent Hermitian random matrices such that the laws of $A_N$ and $B_N$ are unitarily invariant. Let $\mu$ and $\nu$ be compactly supported measures on $[0,\infty)$ such that almost surely $\mu_{A_N} \lra \mu$ and almost surely $\mu_{B_N} \lra \nu$. Let $\varepsilon > 0$ be given. Suppose there exists an $N_0 \in \N$ large enough such that,  for $N \geq N_0$, we have both
\[\sigma(A_N) \subseteq \text{\emph{Supp}}(\mu)_{\varepsilon}\]
\[ \sigma(B_N) \subseteq \text{\emph{Supp}}(\nu)_{\varepsilon}.\]
Then for $\eta>0$ there exists an $N_1$ such that for all $N \geq N_1$ and a constant $C >0$ such that $\sigma(A_N^{1/2}B_NA_N^{1/2}) \subseteq  \text{\emph{Supp}}(\mu \boxtimes \nu)_{\eta +C\varepsilon}$.
\end{lemma}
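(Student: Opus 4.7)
The plan is to mirror the proof of Theorem~\ref{conv}: decompose $A_N$ and $B_N$ into nonnegative cores whose spectra lie in $\supp(\mu)$ and $\supp(\nu)$ respectively, plus small operator-norm perturbations; then apply Theorem~\ref{cm} to the cores and transport the resulting spectral inclusion across the perturbations via the pseudospectrum (Proposition~\ref{pseudo}). The only genuinely new feature is the presence of the square root $A_N^{1/2}$, which needs a short detour to avoid an $O(\varepsilon^{1/2})$ loss and retain the claimed $O(\varepsilon)$ error.

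First I would write $A_N = A_N' + A_N''$ and $B_N = B_N' + B_N''$ by replacing each eigenvalue of $A_N$ (respectively $B_N$) lying outside $\supp(\mu)$ (respectively $\supp(\nu)$) by the nearest point of the support, exactly as in the proof of Theorem~\ref{conv}. The cores $A_N'$ and $B_N'$ are then nonnegative with spectra contained in $\supp(\mu)$ and $\supp(\nu)$, and $\|A_N''\|_{op}, \|B_N''\|_{op} \leq \varepsilon$. Theorem~\ref{cm} applied to the pair $(A_N', B_N')$ supplies $N_1$ such that $\sigma((A_N')^{1/2} B_N' (A_N')^{1/2}) \subseteq \supp(\mu \boxtimes \nu)_{\eta}$ for all $N \geq N_1$.

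The transport step would proceed in two pseudospectrum passes, each time using the cyclic identity $\sigma(XY)\setminus\{0\} = \sigma(YX)\setminus\{0\}$ (with multiplicities) to bypass the square root. On the $B$-side, $A_N^{1/2} B_N A_N^{1/2} = A_N^{1/2} B_N' A_N^{1/2} + A_N^{1/2} B_N'' A_N^{1/2}$, and the second summand has operator norm at most $\|A_N\|_{op}\,\varepsilon \leq C_1 \varepsilon$ since $\|A_N\|_{op}$ is uniformly bounded by the compactness of $\supp(\mu)$; Proposition~\ref{pseudo} applied to the Hermitian matrix $A_N^{1/2} B_N' A_N^{1/2}$ then yields $\sigma(A_N^{1/2} B_N A_N^{1/2}) \subseteq \sigma(A_N^{1/2} B_N' A_N^{1/2})_{C_1 \varepsilon}$. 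On the $A$-side, the cyclic identity gives $\sigma(A_N^{1/2} B_N' A_N^{1/2}) \setminus \{0\} = \sigma((B_N')^{1/2} A_N (B_N')^{1/2}) \setminus \{0\}$; splitting $(B_N')^{1/2} A_N (B_N')^{1/2} = (B_N')^{1/2} A_N' (B_N')^{1/2} + (B_N')^{1/2} A_N'' (B_N')^{1/2}$ with second term of norm at most $C_2 \varepsilon$, applying Proposition~\ref{pseudo} once more, and cycling back, I conclude $\sigma(A_N^{1/2} B_N' A_N^{1/2}) \setminus \{0\} \subseteq (\sigma((A_N')^{1/2} B_N' (A_N')^{1/2}) \setminus \{0\})_{C_2 \varepsilon}$. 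Chaining the three inclusions gives $\sigma(A_N^{1/2} B_N A_N^{1/2}) \subseteq \supp(\mu \boxtimes \nu)_{\eta + C \varepsilon}$ with $C = C_1 + C_2$; any extra zero eigenvalue produced when $A_N$ is singular either already lies in $\supp(\mu \boxtimes \nu)_\eta$ (when $0 \in \supp(\mu)$, so $(A_N')^{1/2} B_N' (A_N')^{1/2}$ is itself singular) or does not occur at all (when $0 \notin \supp(\mu)$, since then $A_N' \geq c > 0$ forces $A_N$ to be invertible for $\varepsilon < c$).

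The main obstacle is precisely the square-root issue flagged above: a direct comparison of $A_N^{1/2}$ with $(A_N')^{1/2}$ only yields the operator-Hölder estimate $\|A_N^{1/2} - (A_N')^{1/2}\| = O(\varepsilon^{1/2})$, which would weaken the conclusion. The cyclic-permutation trick avoids ever comparing the two square roots directly; instead it transfers $A_N^{1/2}$ from one side of the product to the other, where the splitting $A_N = A_N' + A_N''$ contributes only linearly in $\varepsilon$.
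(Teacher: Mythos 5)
Your proof is correct but takes a genuinely different route, and in fact it repairs a gap in the paper's own argument. The paper applies the same decomposition $A_N=A_N'+A_N''$, $B_N=B_N'+B_N''$, invokes Theorem~\ref{cm} on the cores, and then bounds the eigenvalue shift by the Weyl perturbation inequality applied to the telescoping difference $A_N^{1/2}B_NA_N^{1/2}-(A'_N)^{1/2}B'_N(A'_N)^{1/2}$. Two of the three telescope terms are controlled by $\|A_N^{1/2}-(A'_N)^{1/2}\|$, which the paper asserts is $O(\varepsilon)$. As you observe, the square root is only operator-H\"older of exponent $1/2$ near the origin: if $0\in\supp(\mu)$ and a spike $\theta_j$ has nearest anchor $a_j=0$, then $\theta_j^{1/2}-a_j^{1/2}=\theta_j^{1/2}$ is of order $\varepsilon^{1/2}$, not $\varepsilon$, so the paper's estimate tacitly requires $\supp(\mu)$ to be bounded away from zero, a hypothesis not stated in the lemma. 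Your cyclic-permutation device avoids ever comparing $A_N^{1/2}$ with $(A'_N)^{1/2}$; only the linear splittings $A_N=A_N'+A_N''$ and $B_N=B_N'+B_N''$ enter the norm bound, yielding the claimed $O(\varepsilon)$ error unconditionally. One small simplification you could make: for square matrices $X,Y$ of the same size, $XY$ and $YX$ have identical characteristic polynomials, so $\sigma(XY)=\sigma(YX)$ as multisets with the zero eigenvalue included; the $\setminus\{0\}$ bookkeeping and the closing discussion of stray zero eigenvalues are therefore unnecessary.
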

\begin{proof}
Suppose $N > N_0$. Due to our assumption of unitary invariance, we may assume $A_N$ and $B_N$ are diagonal matrices. We write
\[A_N = \diag(\theta_1, \theta_2, \dots, \theta_{k(N)}, \alpha_1, \dots, \alpha_{N-k(N)})\]
\[B_N = \diag(\tau_1, \tau_2, \dots, \tau_{\ell(N)}, \beta_1, \dots, \beta_{N-\ell(N)}).\]
where $\theta_1, \dots, \theta_{k(N)}$ are the $k(N)$ elements in $(\lambda_{i}(A_N))_{i=1}^{N}$ that are outside the $\supp(\mu)_\varepsilon$, and $\tau_1, \dots, \tau_{\ell(N)}$ are the $\ell(N)$ elements in $(\lambda_{i}(B_N))_{i=1}^{N}$ that are outside the $\supp(\nu)_\varepsilon$. Define $a_i$ as a point in $\supp(\mu)$ such that $|\theta_i - a_i| = \dist(\theta_i, \supp(\mu))$, and similarly define $b_i$ as a point in $\supp(\nu)$ such that $|\tau_i - b_i| = \dist(\tau_i, \supp(\nu))$. Define
\[A'_N = \diag(a_1, \dots, a_{k(N)}, \alpha_1, \dots, \alpha_{N-k(N)})\]
\[B'_N = \diag(b_1, \dots, b_{k(N)}, \beta_1, \dots, \beta_{N-\ell(N)}).\]

Since $\sigma(A'_N) \in \supp(\mu)$ and $\sigma(B'_N) \in \supp(\nu)$ for all $N \geq N_0$, we know by Theorem \ref{cm} that for any  $\eta >0$ there exists an $N_1$ such that for all $N \geq N_1$ we have $\sigma(A_N^{'1/2}B'_NA_N^{'1/2}) \subseteq \supp(\mu \boxtimes \nu)_\eta$. From \cite{tao2012topics}, we see that for any $i = 1, \dots, N$ we have 
\[|\lambda_i(A_N^{1/2}B_NA_N^{1/2}) - \lambda_i((A'_N)^{1/2}B'_N(A'_N)^{1/2})| \leq ||A_N^{1/2}B_NA_N^{1/2} - (A'_N)^{1/2}B'_N(A'_N)^{1/2}||\]
and we can make the following estimate,
\begin{align*}
\\||A_N^{1/2}B_NA_N^{1/2} - (A'_N)^{1/2}B'_N(A'_N)^{1/2}||   &= ||A_N^{1/2}B_NA_N^{1/2} - (A'_N)^{1/2}B_NA_N^{1/2} \\
&\indent + (A'_N)^{1/2}B_NA_N^{1/2} - (A'_N)^{1/2}B'_NA_N^{1/2}\\
&\indent + (A'_N)^{1/2}B'_NA_N^{1/2} - (A'_N)^{1/2}B'_N(A'_N)^{1/2}|| \\
& \leq ||A_N^{1/2}B_NA_N^{1/2} - (A'_N)^{1/2}B_NA_N^{1/2}|| \\
&\indent + ||A_N^{'1/2}B_NA_N^{1/2} - (A'_N)^{1/2}B'_NA_N^{1/2}|| \\
&\indent + ||(A'_N)^{1/2}B'_NA_N^{1/2} - (A'_N)^{1/2}B'_N(A'_N)^{1/2}|| \\
& \leq ||B_NA_N^{1/2} ||\cdot|| A_N^{1/2} - (A'_N)^{1/2} || \\
&\indent + || (A'_N)^{1/2}||\cdot ||A_N^{1/2}|| \cdot||B_N- B'_N||\\ 
&\indent +  ||(A'_N)^{1/2}B'_N||\cdot ||A_N^{1/2} - (A'_N)^{1/2}|| \\
& \leq c_1 \varepsilon + c_2 \varepsilon +c_3 \varepsilon \\
& = C\varepsilon \\
\end{align*}

where $C, c_1, c_2, c_3$ are positive constants. Thus, it follows that for $N \geq \max\{N_1,N_0\}$ we have 
\[\sigma(A_N^{1/2}B_NA_N^{1/2}) \subseteq  \text{\emph{Supp}}(\mu \boxtimes \nu)_{\eta + C\varepsilon}.\]
\end{proof}

We now present and prove the positive multiplicity version Theorem \ref{main}.
\begin{theorem}\label{mult-real}
Suppose the following:
\begin{enumerate}[$1$.]
\item Two compactly supported Borel probability measures $\mu$ and $\nu$ on $[0,\infty)$. 

\item A sequence of fixed real numbers $\{\theta_i\}_{i=1}^{\infty}$ such that:
\begin{enumerate}[\indent {$-$}]
\item $\theta_i >0$ for all $i = 1, 2, \dots$;
\item $\theta_i$ does not belong to $\text{\emph{Supp}}(\mu)$ for all $i = 1, 2, \dots$;
\item  \text{\emph{dist}}$(\theta_i, \text{\emph{Supp}}(\mu)) \lra 0$ as $i \lra \infty$. 
\end{enumerate}

\item A sequence $\{A_N\}_{N\in\N}$ of random nonnegative matrices of size $N\times N$ such that:
\begin{enumerate}[\indent {$-$}]
\item $\mu_{A_N}$ converges weakly to $\mu$ as $N \lra \infty$;
\item a sequence $(\varphi(N))_{N \in \N}$ satisfying the conditions in  Proposition \ref{Qseq};
\item for $\theta \in \{\theta_1, \dots, \theta_{\varphi(N)}\}$, the sequence $(\lambda_n(A_N))_{n=1}^{N}$ satisfies 
\[\text{\emph{card}}\{n : \lambda_n(A_N) = \theta\}) = \text{\emph{card}}(\{i : \theta_i = \theta\});\]
\item the eigenvalues of $A_N$ which are not equal to some $\theta_i$ converge uniformly to $\text{\emph{Supp}}(\mu)$ as $N \lra \infty$, that is 
\[\lim_{N \lra \infty} \max_{\lambda_n(A_N) \notin \{\theta_1,\dots, \theta_{\varphi(N)}\}} \text{\emph{dist}}(\lambda_n(A_N),\text{\emph{Supp}}(\mu)) = 0.\]
\end{enumerate}

\item A sequence $\{U_N\}_{N \in \N}$ of unitary random matrices such that the distribution of $U_N$ is the normalized Haar measure on the unitary group U$(N)$.

\item A sequence of fixed real numbers $\{\tau_j\}_{j=1}^{\infty}$ such that:
\begin{enumerate}[\indent {$-$}]
\item $\tau_j >0$ for all $j = 1, 2, \dots$;
\item $\tau_j$ does not belong to $\text{\emph{Supp}}(\nu)$ for all $j = 1, 2, \dots$;
\item  \text{\emph{dist}}$(\tau_j, \text{\emph{Supp}}(\nu)) \lra 0$ as $j \lra \infty$. 
\end{enumerate}

\item A sequence $\{B_N\}_{N\in\N}$ of random nonnegative matrices of size $N\times N$ such that:
\begin{enumerate}[\indent {$-$}]
\item $\mu_{B_N}$ converges weakly to $\nu$ as $N \lra \infty$;
\item a sequence $(\psi(N))_{N \in \N}$ satisfying the conditions in  Proposition \ref{Qseq};
\item for $\tau \in \{\tau_1, \dots, \tau_{\psi(N)}\}$, the sequence $(\lambda_n(B_N))_{n=1}^{N}$ satisfies 
\[\text{\emph{card}}(\{n : \lambda_n(B_N) = \tau\}) = \text{\emph{card}}(\{j : \tau_j = \tau\});\]
\item the eigenvalues of $B_N$ which are not equal to some $\tau_j$ converge uniformly to $\text{\emph{Supp}}(\nu)$ as $\N \lra \infty$, that is 
\[\lim_{N \lra \infty} \max_{\lambda_n(B_N) \notin \{\theta_1,\dots, \theta_{\psi(N)}\}} \text{\emph{dist}}(\lambda_n(B_N),\text{\emph{Supp}}(\nu)) = 0.\]
\end{enumerate}
\end{enumerate}

Set $K = \text{\emph{Supp}}(\mu \boxtimes \nu)$, $v_k (z) = \omega_k(1/z)$ for $k = 1,2$, and 
\[\displaystyle K' = K \cup \left[ \bigcup_{i=1}^{\infty} v_1^{-1} (\{1/\theta_i\})\right] \cup \left[ \bigcup_{j=1}^{\infty} v_2^{-1} (\{1/\tau_j\})\right]\]
where $\omega_1,\omega_2$ are the subordination functions corresponding to the free convolution $\mu \boxtimes \nu$.
Then,
\begin{enumerate}[$1$.]
\item Given $\varepsilon >0$, almost surely, there exists an $N_0 \in \N$ such that for all $N >N_0$, we have \[\sigma(X_N) \subset K'_{\varepsilon}\] where $X_N = A_N^{1/2}U_NB_NU_N^*A_N^{1/2}$
\item  Fix a number $\rho \in K'\bs K$. Let $\varepsilon > 0 $ such that $(\rho - 2\varepsilon , \rho+2\varepsilon)\cap K' = \{\rho\}$ and set $k = \text{\emph{card}}(\{i:v_1(\rho) = 1/\theta_i\})$ and $\ell = \text{\emph{card}}(\{j:v_2(\rho) = 1/\tau_j\})$, then almost surely, there exists an $N_0 \in \N$ such that for all $N>N_0$, we have \[\text{\emph{card}}(\{\sigma(X_N)\cap (\rho-\varepsilon,\rho +\varepsilon)\}) = k+\ell.\]
\end{enumerate}
\end{theorem}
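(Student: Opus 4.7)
The plan is to follow the proof of Theorem \ref{main} step-by-step, substituting Lemma \ref{convMultPos} for Theorem \ref{conv} and Lemma \ref{bbcf5.1-mult} for Lemma \ref{bbcf5.1}. By bi-invariance of the Haar measure and a Fubini argument, we may assume $A_N$ and $B_N$ are deterministic diagonal matrices. Since $\dist(\theta_i,\supp(\mu))\to 0$ and $\dist(\tau_j,\supp(\nu))\to 0$, only finitely many outlier spikes (say $p$ and $q$ of them) lie outside $\supp(\mu)_{\varepsilon/2}$ and $\supp(\nu)_{\varepsilon/2}$. Fix $\alpha\in\supp(\mu)\cap(0,\infty)$ and $\beta\in\supp(\nu)\cap(0,\infty)$, reorder the diagonals so that these outliers occupy the top $p$ and $q$ positions, and split $A_N=A_N'+A_N''$ with $A_N''=P_N^*\Theta P_N$, $\Theta=\diag(\theta_i-\alpha)_{i=1}^p$, and analogously $B_N=B_N'+B_N''$. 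Lemma \ref{convMultPos} applied to $X_N'=(A_N')^{1/2}U_NB_N'U_N^*(A_N')^{1/2}$ provides random $\delta_N\to 0$ almost surely with $\sigma(X_N')\subseteq K_{\varepsilon+\delta_N}$.

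In Step 1 ($q=0$, so $B_N=B_N'$), use the identity $\det(zI_N-A^{1/2}MA^{1/2})=\det(zI_N-MA)$ to rewrite $\det(zI_N-X_N)=\det(zI_N-Z_N)$ with $Z_N=U_NB_N'U_N^*A_N$, and correspondingly $Z_N'=WA_N'$ where $W=U_NB_N'U_N^*$. Since $Z_N-Z_N'=WP_N^*\Theta P_N$ is a rank-$p$ perturbation,
\[
\det(zI_N-Z_N)=\det(zI_N-Z_N')\det\!\bigl(I_p-P_N(zI_N-Z_N')^{-1}WP_N^*\Theta\bigr),
\]
so the eigenvalues of $X_N$ outside $K_{\varepsilon+\delta_N}$ are the zeros of $\det(F_N)$, where $F_N(z)=I_p-P_N(zI_N-Z_N')^{-1}WP_N^*\Theta$. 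Combining $(zI_N-Z_N')^{-1}WA_N'=z(zI_N-Z_N')^{-1}-I_N$ with $A_N'P_N^*=\alpha P_N^*$ gives
\[
P_N(zI_N-Z_N')^{-1}WP_N^*=\alpha^{-1}\bigl[zP_N(zI_N-Z_N')^{-1}P_N^*-I_p\bigr],
\]
and the intertwining $(A_N')^{1/2}(zI_N-Z_N')^{-1}=(zI_N-X_N')^{-1}(A_N')^{1/2}$ (a direct consequence of $X_N'(A_N')^{1/2}=(A_N')^{1/2}Z_N'$) together with $(A_N')^{1/2}P_N^*=\alpha^{1/2}P_N^*$ yields $P_N(zI_N-Z_N')^{-1}P_N^*=P_N(zI_N-X_N')^{-1}P_N^*$. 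Applying Lemma \ref{bbcf5.1-mult} and Lemma \ref{RandER} on a dense set of $\overline{\C}\setminus K_\varepsilon$ and extending by analyticity and uniform boundedness (as in Proposition \ref{FNtoF}),
\[
\lim_{N\to\infty}F_N(z)=F(z):=\diag\!\left(\frac{1-\theta_iv_1(z)}{1-\alpha v_1(z)}\right)_{i=1}^p
\]
uniformly on compact subsets of $\overline{\C}\setminus K_\varepsilon$, almost surely. The zeros of $\det F$ are precisely $\bigcup_{i=1}^p v_1^{-1}(\{1/\theta_i\})$, and Lemma \ref{4.5} (with $\gamma=\R$ and $K=K_\varepsilon$) gives both conclusions of the theorem in this case.

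Step 2 ($k=0$) follows by symmetry: since $X_N$ has the same nonzero spectrum as $B_N^{1/2}U_N^*A_NU_NB_N^{1/2}$, the same argument with the roles of $A$ and $B$ swapped and $v_1$ replaced by $v_2$ handles the spikes from $B_N$. Step 3 ($k,\ell>0$) is a perturbation argument identical in structure to Step 3 of Theorem \ref{main}: replace the spike $\theta_{i_0}$ in $A_N$ by $\theta_{i_0}+\eta$ for small $\eta>0$, yielding $\tilde X_N$ with $\|\tilde X_N-X_N\|=O(\eta)$; apply Steps 1 and 2 to $\tilde X_N$ near $\rho$ (where only the $\tau_{j_0}$-contribution remains, giving $\ell$ eigenvalues) and near the shifted outlier (giving $k$ eigenvalues); then invoke Lemma \ref{4.10} to transfer the total count $k+\ell$ back to $X_N$ as $\eta\to 0$.

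The main obstacle is the explicit algebraic reduction in Step 1: one must maneuver around the square roots by passing to the conjugate matrix $Z_N'=WA_N'$, establish the intertwining relation, and exploit that the first $p$ diagonal entries of $A_N'$ equal the positive constant $\alpha$ so that $A_N'$ and its square root can be pulled across the projections $P_N$ and $P_N^*$. Choosing $\alpha\in\supp(\mu)\cap(0,\infty)$ requires $\mu\neq\delta_0$, but otherwise the model is trivial; simplicity of the zeros of $(F(z))_{ii}$ follows from the Julia-Carath\'eodory theorem exactly as in Theorem \ref{main}.
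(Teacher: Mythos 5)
Your proof is correct and reaches the same conclusions, but the algebraic route to the $p\times p$ reduction in Step~1 is genuinely different from the paper's. The paper splits $A_N$ \emph{multiplicatively} as $A_N = A_N'A_N''$ with $A_N'' = P_N^*\Theta P_N + (I_N - P_N^*P_N)$, $\Theta = \diag(\theta_1/\alpha,\dots,\theta_p/\alpha)$, and manipulates $\det(zI_N - A_N^{1/2}U_NB_N'U_N^*A_N^{1/2})$ directly in square-root form, arriving at $F_N(z) = (I_p - \Theta)P_N\bigl(I_N - z^{-1}(A_N')^{1/2}U_NB_N'U_N^*(A_N')^{1/2}\bigr)^{-1}P_N^* + \Theta$. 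You instead split $A_N$ \emph{additively} as $A_N = A_N' + P_N^*\Theta P_N$ with $\Theta = \diag(\theta_i-\alpha)$, use $\det(zI_N - A^{1/2}MA^{1/2}) = \det(zI_N - MA)$ to pass to the similar matrices $Z_N = U_NB_N'U_N^*A_N$ and $Z_N' = U_NB_N'U_N^*A_N'$, and treat the spike contribution as a rank-$p$ additive perturbation of $Z_N'$, exactly as in the additive theorem. Your route then requires two extra observations to connect back to the quantity Lemma~\ref{bbcf5.1-mult} controls — the intertwining $(A_N')^{1/2}(zI-Z_N')^{-1} = (zI-X_N')^{-1}(A_N')^{1/2}$ and the relations $A_N'P_N^* = \alpha P_N^*$, $(A_N')^{1/2}P_N^* = \alpha^{1/2}P_N^*$ — but this buys direct reuse of the additive-model factorization and avoids ever taking the square root of the spike matrix. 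One can check the two limit functions agree: the paper's entry $(1-\theta_j/\alpha)/(1-\alpha v_1(z)) + \theta_j/\alpha$ simplifies to your $(1-\theta_j v_1(z))/(1-\alpha v_1(z))$. A minor imprecision: Lemma~\ref{convMultPos} gives $\sigma(X_N') \subseteq K_{C\varepsilon + \delta_N}$ with the multiplicative constant $C$, not $K_{\varepsilon + \delta_N}$, so the cut-off set should carry the factor $C$; this does not affect the argument.
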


As in the last section, we may assume without loss of generality that both $A_N$ and $B_N$ are diagonal matrices. Let 
\[A_N = \displaystyle \diag\left(\theta_1,\dots, \theta_{\varphi(N)},\alpha_1^{(N)}, \dots, \alpha_{N-\varphi(N)}^{(N)}\right)\]
and
\[B_N = \displaystyle \diag\left(\tau_1,\dots, \tau_{\psi(N)},\beta_1^{(N)}, \dots, \beta_{N-\psi(N)}^{(N)}\right).\]

Let $\varepsilon > 0$ and, let $\theta_{i_1}, \dots, \theta_{i_p}$ be the $p$ elements of $\{\theta_i\}_{i=1}^{\infty}$ that eventually (as $N \to \infty$) lie outside $\supp(\mu)_{\varepsilon}$. Similarly, let $\tau_{j_1}, \dots, \tau_{j_q}$ be the $q$ elements of $\{\tau_j\}_{j=1}^{\infty}$ that eventually lie outside $\supp(\nu)_{\varepsilon}$. We know that $p,q < \infty$ since $\dist(\theta_i, \text{Supp}(\mu)) \to 0$ as $i \to \infty$ and dist$(\tau_j, \text{Supp}(\nu)) \to 0$ as $j \to \infty$.  

Let $M_0\in \N$ be large enough such that for all $N \geq M_0$, we have that $\sigma(A_N)\bs \supp(\mu)_{\varepsilon} = \{\theta_{i_1}, \dots, \theta_{i_p}\}$. For $N \geq M_0$, we may reorder the sequence $(\theta_i)_{i \in \N}$ to write 
\[A_N = \displaystyle \diag\left(\theta_1,\dots,\theta_p, \theta_{p+1}, \dots, \theta_{\varphi(N)},\alpha_1^{(N)}, \dots, \alpha_{N-\varphi(N)}^{(N)}\right)\]
where $\theta_1,\dots,\theta_p$ are precisely the $p$ elements of $\{\lambda_n(A_N)\}_{n=1}^{N}$ that eventually (as $N \to \infty$) lie outside $\supp(\mu)_{\varepsilon}$.
Let $\alpha \in \supp(\mu)\bs \{0\}$ and define the following
\[A_N' = \diag(\underbrace{\alpha,\dots,\alpha}_{p\text{-times}}, \theta_{p+1}, \dots, \theta_{\varphi(N)},\alpha_1^{(N)}, \dots, \alpha_{N-\varphi(N)}^{(N)})\]
and
\[A_N'' = \diag(\theta_1/\alpha,\dots,\theta_p/\alpha, \underbrace{1, \dots, 1}_{(N-p)\text{-times}}).\]
Hence $A_N = A'_NA''_N = A''_NA'_N$, and $A''_N = P_N^* \Theta P_N + I_N - P_N^*P_N$ where $P_N: \C^N \lra \C^p$ is the projection onto the first $p$ coordinates and $\Theta = \diag(\theta_1/\alpha,\dots,\theta_p/\alpha)$. 

Let $M_1 \in \N$ be large enough such that for all $N \geq M_1$ we have $\sigma(B_N) \bs \supp(\nu)_{\varepsilon} = \{\tau_{j_1}, \dots, \tau_{j_q}\}$. Like above, when $N \geq M_1$ we may reorder the sequence $(\tau_j)_{j\in \N}$ to write
\[B_N = \displaystyle \diag\left(\tau_1,\dots,\tau_q, \tau_{q+1}, \dots, \tau_{\psi(N)},\beta_1^{(N)}, \dots, \beta_{N-\psi(N)}^{(N)}\right)\]
where $\tau_1, \dots, \tau_q$ are precisely the $q$ elements of $\{\lambda_n(B_N)\}_{n=1}^{N}$ that eventually lie outside $\supp(\nu)_{\varepsilon}$. Let $\beta \in \supp(\nu)\bs\{0\}$ and define the following, 
\[B_N' = \diag(\underbrace{\beta,\dots,\beta}_{q\text{-times}}, \tau_{q+1}, \dots, \tau_{\psi(N)},\beta_1^{(N)}, \dots, \beta_{N-\psi(N)}^{(N)})\]
and
\[B_N'' = \diag(\tau_1/\beta,\dots,\tau_q/\beta, \underbrace{1, \dots, 1}_{(N-q)\text{-times}}).\]
Thus we similarly have $B_N = B_N' B_N''=B''_NB'_N+I_N - Q_N*Q_N$ where $Q_N: \C^N \lra \C^q$ is the projection onto the first $q$ coordinates, and $T = \diag(\tau_1/\beta, \dots, \tau_q/\beta)$. 

$~$\\
\indent We now use same technique as earlier, and reduce to a $p \times p$ matrix. We have the model $X_N = A_N^{1/2}U_NB_NU_N^*A_N^{1/2}$. Define $N_0 = \max\{M_0, M_1\}$. By construction of  $N_0$, we have that  for all $N > N_0$, both $\sigma(A'_N) \subseteq \supp(\mu)_{\varepsilon}$ and $\sigma(B'_N) \subseteq \supp(\nu)_{\varepsilon}$. By Lemma \ref{convMultPos} we know  that there exists a constant $C$ and random variables $(\delta_N)_{N \in \N}$ such that $\lim_{N \to \infty} \delta_N = 0$ almost surely, and
\[\sigma((A'_N)^{1/2}U_NB'_NU_N^*(A'_N)^{1/2}) \subset (\supp(\mu\boxtimes\nu)_{C\varepsilon})_{\delta_N} = K _{C\varepsilon + \delta_N}.\]

 Define $X'_N = (A_N)^{1/2}U_NB'_NU_N^*(A_N)^{1/2}$ and fix a $z \in \C \bs (K_{C\varepsilon + \delta_N} \cup \{0\})$ such that the matrix 
 \[zI_N - (A'_N)^{1/2}U_NB'_NU_N^*(A'_N)^{1/2}\]
is invertible. We then have
\begin{align*}
&\det(zI_N - X'_N) \\
& \indent \indent  = z^N \det(I_N - z\iv A''_N(A'_N)^{1/2}U_NB'_NU_N^*(A'_N)^{1/2} )\\
& \indent \indent = z^N \det(I_N - A''_N + A''_N(I_N - z\iv (A'_N)^{1/2}U_NB'_NU_N^*(A'_N)^{1/2} ))\\
& \indent \indent  = \det((zI_N - A''_N)(I_N - z\iv (A'_N)^{1/2}U_NB'_NU_N^*(A'_N)^{1/2} )\iv + A''_N)\\
& \indent \indent \indent \indent \times \det(zI_N - (A'_N)^{1/2}U_NB'_NU^*_N(A'_N)^{1/2}).
\end{align*}

The matrix $(zI_N - A''_N)(I_N - z\iv (A'_N)^{1/2}U_NB'_NU_N^*(A'_N)^{1/2} )\iv + A''_N$ is of the form
\[\begin{bmatrix}
    F_N(z)       & * \\
    0       & I_{N-p}
\end{bmatrix}\]
where $F_N$ is the analytic function with values in $M_p(\C)$ defined on $\C \bs (K_{C\varepsilon + \delta_N} \cup \{0\})$ by 
\begin{equation}\label{RMF}
F_N(z) := (I_p - \Theta)P_N\left(I_N - \frac{1}{z}(A'_N)^{1/2} U_N B'_N U_N^* (A'_N)^{1/2} \right)\iv P_N^* + \Theta.
\end{equation}
Since we know the matrix $zI_N - (A'_N)^{1/2}U_NB'_NU^*_N(A'_N)^{1/2}$ is invertible, we now that, for large $N$, that the nonzero eigenvalues of $X'_N$ outside of $K_{C\varepsilon + \delta_N} $ are precisely the zeros of $\det(F_N)$ in the open set $\C \bs (K_{C\varepsilon + \delta_N} \cup \{0\})$. Like in the additive case, the sequence $\{F_N\}$ converges almost surely to a diagonal $p \times p$ matrix function. Denoting $R'_N(z)$ as the resolvent of $ (A'_N)^{1/2}U_NB'_NU^*_N(A'_N)^{1/2}$, we see that 
\[F_N = (I_p - \Theta)P_N(z R'_N(z))P^*_N + \Theta.\]
Using similar techniques as we did in the proof of Proposition \ref{FNtoF}, an application of Lemma \ref{bbcf5.1-mult} with $C_N = A'_N$, $D_N = B'_N$ and $R_N'(z)$ gives the following result,
\begin{prop}\label{4.3}
Almost surely, the sequence $\{F_N\}_{N}$ converges uniformly on the compact sets of $\overline{\C} \bs K_{C\varepsilon + \delta_N} $ to the analytic function $F$ defined on $\overline{\C} \bs K_{C\varepsilon} $ by
\[F(z) = \diag\left( \left( 1-\frac{\theta_j}{\alpha}\right)\frac{1}{1-\alpha \omega_1(z\iv)} + \frac{\theta_j}{\alpha}\right)_{j=1}^{p}.\]
\end{prop}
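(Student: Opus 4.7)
The plan is to reproduce the proof of Proposition \ref{FNtoF} with the multiplicative subordination input from Lemma \ref{bbcf5.1-mult} taking the place of Lemma \ref{bbcf5.1}. Since the first $p$ diagonal entries of $A_N'$ all equal $\alpha>0$, I would first rewrite
\[F_N(z) = (I_p - \Theta)\, P_N \bigl(z R_N'(z)\bigr) P_N^* + \Theta,\]
where $R_N'(z) = \bigl(zI_N - (A_N')^{1/2} U_N B_N' U_N^* (A_N')^{1/2}\bigr)\iv$ is the resolvent of $X_N'$. With $C_N = A_N'$ and $D_N = B_N'$, Lemma \ref{bbcf5.1-mult} yields
\[\lim_{N\to\infty} P_N\, \E[zR_N'(z)]\, P_N^* = \frac{1}{1-\alpha\, \omega_1(z\iv)}\, I_p\]
for every $z \in \C \bs \R$. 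A multiplicative analogue of Lemma 3.1 of \cite{belinschi2017outliers} ensures that $z \mapsto \omega_1(z\iv)$ cannot take the value $1/\alpha$ on $\overline{\C} \bs K_{C\varepsilon}$ (since $\alpha \in \supp(\mu)$), so the candidate limit $F$ is analytic there.

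To pass from expectation to almost-sure convergence I would invoke Lemma \ref{RandER} for the resolvent $R_N'(z)$: the Lipschitz-in-$U$ estimate on $\mathrm{U}(N)$ applies verbatim, with the Lipschitz constant now absorbing the uniformly bounded norm of $(A_N')^{1/2}$, and multiplying by the bounded scalar $z$ is harmless on any fixed compact set. Thus for each fixed $z \in \C \bs \R$,
\[\bigl\| P_N\, z R_N'(z)\, P_N^* - P_N\, \E[z R_N'(z)]\, P_N^* \bigr\| \longrightarrow 0 \quad \text{almost surely,}\]
and intersecting countably many such events over the dense set $\mathcal{D} = \{z \in \C \bs K_{C\varepsilon} : \mathfrak{R}z \in \Q,\ \mathfrak{I}z \in \Q \bs \{0\}\}$ yields a single almost-sure event on which $F_N \to F$ pointwise on $\mathcal{D}$.

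Finally, I would upgrade pointwise convergence to uniform convergence on compact subsets of $\overline{\C} \bs K_{C\varepsilon}$. By Lemma \ref{convMultPos} together with the Collins--Male theorem \cite{collins2014strong}, on an almost-sure event the spectrum of $X_N'$ eventually lies in $K_{C\varepsilon + \delta_N}$ with $\delta_N \to 0$, so $\|z R_N'(z)\|$ is a.s.\ uniformly bounded on any compact subset of $\overline{\C} \bs K_{C\varepsilon}$ (uniform boundedness near $\infty$ follows directly from $F_N(\infty) = I_p$). The family $(F_N)$ is therefore normal on that region, and pointwise a.s.\ convergence on the dense set $\mathcal{D}$ pins down the unique analytic subsequential limit; diagonalizing $(I_p - \Theta)(1 - \alpha\, \omega_1(z\iv))\iv + \Theta$ then gives the stated entrywise formula. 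The main obstacle is the bookkeeping required to align the full-measure event produced by Lemma \ref{RandER} with the full-measure event on which $\delta_N \to 0$, so that on a single event the entire sequence $\{F_N\}$ is eventually defined and analytic on each prescribed compact subset of $\overline{\C} \bs K_{C\varepsilon}$; beyond this, the argument is a direct transcription of the proof of Proposition \ref{FNtoF}.
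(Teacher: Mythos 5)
Your proposal is correct and takes essentially the same approach the paper intends: the paper offers no explicit proof of Proposition~\ref{4.3}, merely stating that one applies Lemma~\ref{bbcf5.1-mult} with $C_N = A_N'$, $D_N = B_N'$, and $R_N'(z)$ in place of Lemma~\ref{bbcf5.1} and otherwise repeats the proof of Proposition~\ref{FNtoF}, and your write-up carries out exactly that adaptation (expectation limit from Lemma~\ref{bbcf5.1-mult}, concentration via Lemma~\ref{RandER} with the Lipschitz constant absorbing $\|(A_N')^{1/2}\|$, pointwise a.s.\ convergence on the rational dense set, and normality plus local boundedness to upgrade to uniform convergence on compacts). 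One cosmetic slip: $R_N'(z)$ is the resolvent of $(A_N')^{1/2}U_NB_N'U_N^*(A_N')^{1/2}$, whereas the paper reserves the symbol $X_N'$ for $A_N^{1/2}U_NB_N'U_N^*A_N^{1/2}$ (with unprimed $A_N$), so calling $R_N'$ ``the resolvent of $X_N'$'' is inaccurate even though the displayed formula you use for $R_N'$ is correct.
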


We now have the tools to give the proof for Theorem \ref{mult-real}
\begin{proof}\textit{(of Theorem \ref{mult-real})}

We first prove the model in the case where only $A_N$ as spikes, that is, for $X'_N =  A_N^{1/2}U_NB'_NU^*_NA_N^{1/2}$. Proposition \ref{4.3} guarantees the existence of the almost sure event on which there exists a sequence $\{\delta_N\}_N \in (0,\infty)$ converging to zero such that 
\begin{enumerate}[\indent {$-$}]
\item $\sigma((A'_N)^{1/2}U_NB'_NU^*_N(A'_N)^{1/2}) \subseteq K_{C\varepsilon + \delta_N}$, and 
\item the sequence $\{F_N\}_{N > p}$ converges to 
\[F(z) = \diag\left( \left( 1-\frac{\theta_j}{\alpha}\right)\frac{1}{1-\alpha \omega_1(z\iv)} + \frac{\theta_j}{\alpha}\right)_{j=1}^{p}\]
uniformly on compact sets of $\overline{\C} \bs K_{C\varepsilon}.$
\end{enumerate}

We want to use an application of Lemma \ref{4.5} with $\gamma = \R$, the sequence $\{F_N\}_{N >p}$, and the uniform on compacts limit $F$. We verify all the conditions of the Lemma \ref{4.5}. Conditions (1) and (3) follow directly from Proposition \ref{4.3}. To show condition (2), that is, show $F_N(z)$ is invertible for $z \notin \R$, we notice in equation (\ref{RMF}), that if $F_N(z)$ is not invertible, then $z$ is an eigenvalue of a self-adjoint matrix $X'_N$, and hence it is a real number. 

Lastly, we have $F(\infty) = I_p$, and since 
\[(F'(z))_{jj} = \frac{\omega'_1(1/z)(\theta_j - \alpha)}{z^2(1-\alpha \omega_1(1/z))^2}\]
and the zeros of $\omega'_1$ are simple (by the Julia-Carath\'eodory theorem), we have that the zeros $F$ are simple. Thus, Lemma \ref{4.5} applies to $F_N$ and $F$. 

For almost every $\delta >0$, we have that the boundary points of $K_{C\varepsilon + \delta}$ are not zeros of $\det(F)$. When this condition is satisfied, Lemma \ref{4.5} gives us exactly the results of Theorem 4.2, in the case where $B_N$ has no spikes. We saw above that the nonzero eigenvalues of $X'_N$ in $\C \bs K_{C \varepsilon + \delta}$ are exactly the zeros of $\det(F_N)$, and the set of points $z$ such that $F(z)$ is not invertible is precisely $\cup_{i=1}^{p} v_1\iv(\{1/\theta_i\})$. The case where $B'_N$ has spikes is completely analogous to the reasoning found in the proof of Theorem \ref{main}. 
\end{proof}

Finally, we present the multiplicative model $X_N = A_NU_NB_NU_N^*$ where $A_N$ and $B_N$ are unitary. For the following theorem, we use the notation that for $\rho \in \T$ and $\varepsilon >0$, the interval $(\rho - \varepsilon, \rho +\varepsilon)$ consists of elements in $\T$ whose argument differs from $\text{Arg}(\rho)$ by less than $\varepsilon$. We will use a result similar to Lemma \ref{convMultPos}.

\begin{lemma}\label{convMultT}
Let $A_N$ and $B_N$ are $N \times N$ independent Haar unitary matrices such that the laws of $A_N$ and $B_N$ are unitarily invariant. Let $\mu$ and $\nu$ be compactly supported measures on $\T$ such that almost surely $\mu_{A_N} \lra \mu$ and almost surely $\mu_{B_N} \lra \nu$. Let $\varepsilon > 0$ be given. Suppose there exists an $N_0 \in \N$ large enough such that,  for $N \geq N_0$, we have both
\[\sigma(A_N) \subseteq \text{\emph{Supp}}(\mu)_{\varepsilon}\]
\[ \sigma(B_N) \subseteq \text{\emph{Supp}}(\nu)_{\varepsilon}.\]
Then for $\eta>0$ there exists an $N_1$ such that for all $N \geq N_1$ and a constant $C >0$ such that $\sigma(A_NB_N) \subseteq  \text{\emph{Supp}}(\mu \boxtimes \nu)_{\eta +C\varepsilon}$.
\end{lemma}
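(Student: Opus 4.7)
The plan is to mimic the strategy of Lemma \ref{convMultPos}, but since $A_N$ and $B_N$ are unitary the multiplicative decomposition becomes simpler: there is no need to split off square-roots, and the operator norm estimate only involves two cross-terms instead of three.

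First, by the unitary invariance of the laws of $A_N$ and $B_N$, I will assume without loss of generality that both matrices are diagonal. For $N \geq N_0$, let $\theta_1^{(N)}, \dots, \theta_{k(N)}^{(N)} \in \T$ be the eigenvalues of $A_N$ lying outside $\supp(\mu)_{\varepsilon}$, and let $\tau_1^{(N)}, \dots, \tau_{\ell(N)}^{(N)} \in \T$ be the eigenvalues of $B_N$ lying outside $\supp(\nu)_{\varepsilon}$. For each such eigenvalue choose a nearest point $a_i \in \supp(\mu)$ so that $|\theta_i^{(N)} - a_i| = \dist(\theta_i^{(N)}, \supp(\mu)) \leq \varepsilon$, and similarly $b_i \in \supp(\nu)$ with $|\tau_i^{(N)} - b_i| \leq \varepsilon$. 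Define $A_N'$ and $B_N'$ to be the diagonal unitary matrices obtained from $A_N$ and $B_N$ by replacing each spike $\theta_i^{(N)}$ (respectively $\tau_i^{(N)}$) with the corresponding $a_i$ (respectively $b_i$). Since all entries lie on $\T$, these remain unitary and by construction $\sigma(A_N') \subseteq \supp(\mu)$ and $\sigma(B_N') \subseteq \supp(\nu)$, while $\|A_N - A_N'\| \leq \varepsilon$ and $\|B_N - B_N'\| \leq \varepsilon$.

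Second, I will apply the Collins--Male strong convergence result \cite{collins2014strong} in the Haar unitary multiplicative setting (the analogue of Theorem \ref{cm}) to the matrices $A_N'$ and $U_N B_N' U_N^*$, or equivalently to $A_N' B_N'$ after the unitary invariance reduction. This gives, for the prescribed $\eta > 0$, an $N_1 \in \N$ such that $\sigma(A_N' B_N') \subseteq \supp(\mu \boxtimes \nu)_{\eta}$ for all $N \geq N_1$.

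Third, I will control the perturbation by a straightforward two-term triangle inequality, using $\|A_N\| = \|B_N'\| = 1$:
\[
\|A_N B_N - A_N' B_N'\| \leq \|A_N\|\,\|B_N - B_N'\| + \|A_N - A_N'\|\,\|B_N'\| \leq 2\varepsilon.
\]
Finally, since $A_N B_N$ and $A_N' B_N'$ are both unitary, hence normal, I conclude via Proposition \ref{pseudo}(3): for any eigenvalue $\lambda$ of $A_N B_N$ there is an eigenvalue $\lambda'$ of $A_N' B_N'$ with $|\lambda - \lambda'| \leq \|A_N B_N - A_N' B_N'\| \leq 2\varepsilon$. Combining with the previous step, for $N \geq \max\{N_0, N_1\}$,
\[
\sigma(A_N B_N) \subseteq \sigma(A_N' B_N')_{2\varepsilon} \subseteq \supp(\mu \boxtimes \nu)_{\eta + 2\varepsilon},
\]
so the conclusion holds with constant $C = 2$.

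The main potential obstacle is ensuring that the Collins--Male strong convergence result applies directly in the Haar unitary multiplicative setting on $\T$; this is not the precisely stated case of Theorem \ref{cm}, but it is part of the broader strong convergence framework of \cite{collins2014strong} and can be invoked without modification. Once that ingredient is in hand, the remaining arguments are the straightforward triangle and Weyl-type estimates sketched above.
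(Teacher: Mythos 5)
Your proof is correct and follows essentially the same strategy as the paper's: reduce to diagonal matrices, replace the out-of-support eigenvalues by nearest points in $\supp(\mu)$ and $\supp(\nu)$, apply strong convergence to the truncated product, and then control the spectrum under the norm-$O(\varepsilon)$ perturbation. The only substantive differences are (a) you use the pseudospectrum characterization for normal matrices (Proposition \ref{pseudo}(3)) to transfer the spectral inclusion, whereas the paper cites a Weyl-type eigenvalue perturbation bound from \cite{tao2012topics}; since $A_NB_N$ and $A_N'B_N'$ are unitary rather than Hermitian, the pseudospectrum argument is arguably the cleaner justification here; and (b) you exploit $\|A_N\|=\|B_N'\|=1$ to pin down the explicit constant $C=2$, while the paper leaves $C$ implicit. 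Your final caveat about invoking Collins--Male in the Haar-unitary multiplicative setting is well taken: as stated, Theorem \ref{cm} gives the $\boxtimes$ conclusion only when one factor is nonnegative, so the paper is also implicitly appealing to the unitary analogue from \cite{collins2014strong}; you are right that this needs to be (and can be) invoked, and flagging it is appropriate.
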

\begin{proof} The proof follows in nearly the exact same way as the proof of Lemma \ref{convMultPos}. Suppose $N > N_0$. Due to our assumption of unitary invariance, we may assume $A_N$ and $B_N$ are diagonal matrices. We write
\[A_N = \diag(\theta_1, \theta_2, \dots, \theta_{k(N)}, \alpha_1, \dots, \alpha_{N-k(N)})\]
\[B_N = \diag(\tau_1, \tau_2, \dots, \tau_{\ell(N)}, \beta_1, \dots, \beta_{N-\ell(N)})\]
where $\theta_1, \dots, \theta_{k(N)}$ are the $k(N)$ elements in $(\lambda_{i}(A_N))_{i=1}^{N}$ that are outside the $\supp(\mu)_\varepsilon$, and $\tau_1, \dots, \tau_{\ell(N)}$ are the $\ell(N)$ elements in $(\lambda_{i}(B_N))_{i=1}^{N}$ that are outside the $\supp(\nu)_\varepsilon$. Define $a_i$ as a point in $\supp(\mu)$ such that $|\theta_i - a_i| = \dist(\theta_i, \supp(\mu))$, and similarly define $b_i$ as a point in $\supp(\nu)$ such that $|\tau_i - b_i| = \dist(\tau_i, \supp(\nu))$. Define
\[A'_N = \diag(a_1, \dots, a_{k(N)}, \alpha_1, \dots, \alpha_{N-k(N)})\]
\[B'_N = \diag(b_1, \dots, b_{k(N)}, \beta_1, \dots, \beta_{N-\ell(N)}).\]

Notice we have
\begin{align*}
\\||A_NB_N - A'_NB'_N||   &= ||A_NB_N - A'_NB_N + A'_NB_N - A'_NB'_N||\\
& \leq ||A_NB_N - A'_NB_N|| + ||A'_NB_N - A'_NB'_N||\\
& \leq ||B_N||||A_N - A'_N|| + ||A'_N||||B_N - B'_N|| \\
& \leq c_1 \varepsilon + c_2 \varepsilon \\
& = C\varepsilon 
\end{align*}
where $C, c_1, c_2$ are positive constants. Since we have that the $\sigma(A'_N) \in \supp(\mu)$ and $\sigma(B'_N) \in \supp(\nu)$ for all $N \geq N_0$, we know by Theorem \ref{cm} that for any  $\eta >0$ there exists an $N_1$ such that for all $N \geq N_1$ we have $\sigma(A'_NB'_N) \subseteq \supp(\mu \boxtimes \nu)_\eta$.

From \cite{tao2012topics}, we see that for any $i = 1, \dots, N$ we have 
\[|\lambda_i(A_NB_N) - \lambda_i(A'_NB'_N)| \leq ||A_NB_N - A'_NB'_N||.\]
Thus, it follows that for $N \geq \max\{N_1,N_0\}$ we have 
\[\sigma(A_NB_N) \subseteq  \text{\emph{Supp}}(\mu \boxtimes \nu)_{\eta + C\varepsilon}.\]
\end{proof}

\begin{theorem}
Suppose we have the following:
\begin{enumerate}[$1$.]
\item Two compactly supported Borel probability measures $\mu$ and $\nu$ on $\mathbb{T}$ with nonzero first moments such that $\text{\emph{Supp}}(\mu \boxtimes \nu) \neq \mathbb{T}$. 

\item A sequence of fixed complex numbers $\{\theta_i\}_{i=1}^{\infty} \subseteq \T$ such that:
\begin{enumerate}[\indent {$-$}]
\item $\text{\emph{Arg}} (\theta_i) \in [0, 2\pi)$for all $i = 1, 2, \dots$;
\item $\theta_i$ does not belong to $\text{\emph{Supp}}(\mu)$ for all $i = 1, 2, \dots$;
\item  \text{\emph{dist}}$(\theta_i, \text{\emph{Supp}}(\mu)) \lra 0$ as $i \lra \infty$. 
\end{enumerate}

\item A sequence $\{A_N\}_{N\in\N}$ of random Haar unitary matrices of size $N\times N$ such that:
\begin{enumerate}[\indent {$-$}]
\item $\mu_{A_N}$ converges weakly to $\mu$ as $N \lra \infty$;
\item a sequence $(\varphi(N))_{N \in \N}$ satisfying the conditions in  Proposition \ref{Qseq};
\item for $\theta \in \{\theta_1, \dots, \theta_{\varphi(N)}\}$, the sequence $(\lambda_n(A_N))_{n=1}^{N}$ satisfies 
\[\text{\emph{card}}(\{n : \lambda_n(A_N) = \theta\}) = \text{\emph{card}}(\{i : \theta_i = \theta\});\]
\item the eigenvalues of $A_N$ which are not equal to some $\theta_i$ converge uniformly to $\text{\emph{Supp}}(\mu)$ as $N \lra \infty$, that is 
\[\lim_{N \lra \infty} \max_{\lambda_n(A_N) \notin \{\theta_1,\dots, \theta_{\varphi(N)}\}} \text{\emph{dist}}(\lambda_n(A_N),\text{\emph{Supp}}(\mu)) = 0\]
\end{enumerate}

\item A sequence $\{U_N\}_{N \in \N}$ of unitary random matrices such that the distribution of $U_N$ is the normalized Haar measure on the unitary group U$(N)$.

\item A sequence of fixed real numbers $\{\tau_j\}_{j=1}^{\infty} \subseteq \T$ such that:
\begin{enumerate}[\indent {$-$}]
\item $\text{\emph{Arg}} (\tau_j) \in [0, 2\pi)$for all $i = 1, 2, \dots$;
\item $\tau_j$ does not belong to $\text{\emph{Supp}}(\nu)$ for all $j = 1, 2, \dots$;
\item  \text{\emph{dist}}$(\tau_j, \text{\emph{Supp}}(\nu)) \lra 0$ as $j \lra \infty$. 
\end{enumerate}

\item A sequence $\{B_N\}_{N\in\N}$ of random Haar Unitary matrices of size $N\times N$ such that:
\begin{enumerate}[\indent {$-$}]
\item $\mu_{B_N}$ converges weakly to $\nu$ as $N \lra \infty$;
\item a sequence $(\psi(N))_{N \in \N}$ satisfying the conditions in  Proposition \ref{Qseq};
\item for $\tau \in \{\tau_1, \dots, \tau_{\psi(N)}\}$, the sequence $(\lambda_n(B_N))_{n=1}^{N}$ satisfies 
\[\text{\emph{card}}(\{n : \lambda_n(B_N) = \tau\}) = \text{\emph{card}}(\{j : \tau_j = \tau\});\]
\item the eigenvalues of $B_N$ which are not equal to some $\tau_j$ converge uniformly to $\text{\emph{Supp}}(\nu)$ as $\N \lra \infty$, that is 
\[\lim_{N \lra \infty} \max_{\lambda_n(B_N) \notin \{\theta_1,\dots, \theta_{\psi(N)}\}} \text{\emph{dist}}(\lambda_n(B_N),\text{\emph{Supp}}(\nu)) = 0\]
\end{enumerate}
\end{enumerate}

Set $K = \text{\emph{Supp}}(\mu \boxtimes \nu)$, $v_k (z) = \omega_k(1/z)$ for $k = 1,2$, and 
\[\displaystyle K' = K \cup \left[ \bigcup_{i=1}^{\infty} v_1^{-1} (\{1/\theta_i\})\right] \cup \left[ \bigcup_{j=1}^{\infty} v_2^{-1} (\{1/\tau_j\})\right]\]
where $\omega_1,\omega_2$ are the subordination functions corresponding to the free convolution $\mu \boxtimes \nu$. 
Then,
\begin{enumerate}[$1$.]
\item Given $\varepsilon >0$, almost surely, there exists an $N_0 \in \N$ such that for all $N >N_0$, we have \[\sigma(X_N) \subset K'_{\varepsilon}\] where $X_N =  A_NU_NB_NU_N^*.$
\item  Fix a number $\rho \in K'\bs K$. Let $\varepsilon > 0 $ such that $(\rho - 2\varepsilon , \rho+2\varepsilon)\cap K' = \{\rho\}$ and set $k = \text{\emph{card}}(\{i:v_1(\rho) = 1/\theta_i\})$ and $\ell = \text{\emph{card}}(\{j:v_2(\rho) = 1/\tau_j\})$, then almost surely, there exists an $N_0 \in \N$ such that for all $N>N_0$, we have \[\text{\emph{card}}(\{\sigma(X_N)\cap (\rho-\varepsilon,\rho +\varepsilon)\}) = k+\ell.\]
\end{enumerate}
\end{theorem}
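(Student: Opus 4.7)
The plan is to adapt the proof of Theorem~\ref{mult-real} to the unitary setting, with Lemma~\ref{convMultT} playing the role of Lemma~\ref{convMultPos} and $\gamma = \T$ in Lemma~\ref{4.5}. By bi-invariance of Haar measure I may assume $A_N, B_N$ are diagonal. Fix $\varepsilon > 0$. The accumulation hypothesis forces only finitely many $\theta_i$ and $\tau_j$ to lie eventually outside $\supp(\mu)_\varepsilon$ and $\supp(\nu)_\varepsilon$; call these $\theta_1,\dots,\theta_p$ and $\tau_1,\dots,\tau_q$ after reordering. Pick $\alpha \in \supp(\mu)$ and $\beta \in \supp(\nu)$ (both in $\T$), and factor $A_N = A'_N A''_N$, $B_N = B'_N B''_N$, where $A'_N, B'_N$ are obtained from $A_N, B_N$ by replacing the troublesome entries by $\alpha, \beta$, and
\[A''_N = P_N^* \Theta P_N + (I_N - P_N^* P_N), \qquad B''_N = Q_N^* T Q_N + (I_N - Q_N^* Q_N),\]
with $\Theta = \diag(\theta_1/\alpha,\dots,\theta_p/\alpha)$, $T = \diag(\tau_1/\beta,\dots,\tau_q/\beta)$, and $P_N, Q_N$ the projections onto the first $p$ and $q$ coordinates. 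Since $A'_N, B'_N$ satisfy the hypotheses of Lemma~\ref{convMultT}, the eigenvalues of $X'_N := A'_N U_N B'_N U_N^*$ almost surely concentrate on $K_{C\varepsilon + \delta_N}$ for some random $\delta_N \to 0$.

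For $z \in \C \setminus (K_{C\varepsilon + \delta_N} \cup \{0\})$, the Sylvester identity $\det(I - XY) = \det(I - YX)$ yields a factorization
\[\det(zI_N - X_N) = \det(zI_N - X'_N)\, \det F_N(z),\]
exactly as in the derivation of equation~(\ref{RMF}), where
\[F_N(z) = (I_p - \Theta)\, P_N \bigl( I_N - z\iv A'_N U_N B'_N U_N^* \bigr)\iv P_N^* + \Theta\]
is a random analytic $M_p(\C)$-valued function. Hence the nonzero eigenvalues of $X_N$ outside $K_{C\varepsilon + \delta_N}$ are precisely the zeros of $\det F_N$. Combining Lemma~\ref{RandER}, the strong-convergence estimate of \cite{collins2014strong}, and a unitary analogue of Lemma~\ref{bbcf5.1-mult} applied to the resolvent $R'_N(z) = (zI_N - A'_N U_N B'_N U_N^*)\iv$, an approximation argument over a countable dense set of $z$ yields almost surely $F_N \to F$ uniformly on compact subsets of $\overline{\C} \setminus K_{C\varepsilon}$, where
\[F(z) = \diag\!\left(\left(1 - \frac{\theta_j}{\alpha}\right)\frac{1}{1 - \alpha\, v_1(z)} + \frac{\theta_j}{\alpha}\right)_{j=1}^{p},\]
and the zeros of $(F(z))_{jj}$ occur precisely at $v_1(z) = 1/\theta_j$.

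I then apply Lemma~\ref{4.5} with $\gamma = \T$ and $K = K_{C\varepsilon}$. The hypothesis $\supp(\mu \boxtimes \nu) \neq \T$ ensures that $K$ is a proper compact subset of $\gamma$ for all small $\varepsilon$; invertibility of $F_N$ off $\T$ follows because any singularity corresponds to an eigenvalue of the unitary $X_N$; and the Julia--Carath\'eodory theorem applied to $\omega_1$ gives simplicity of the zeros of $F$. This settles the case $q = 0$, and swapping the roles of $A_N$ and $B_N$ (with $\omega_2$ in place of $\omega_1$) settles the case $k = 0$. The general case $k \cdot \ell > 0$ is then reduced, exactly as in Step~3 of the proof of Theorem~\ref{main}, by a perturbation argument: replace the additive shift $\eta E_{A_N}(\{\theta_{i_0}\})$ by a small multiplicative rotation of $A_N$ on the spectral subspace corresponding to $\theta_{i_0}$, and invoke Lemma~\ref{4.10} applied to a Hermitian lift of $X_N$ (such as the one obtained from the Cayley transform) to count eigenvalues in the small arc $(\rho - \xi, \rho + \xi) \subseteq \T$.

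The main obstacle is the unitary analogue of Lemma~\ref{bbcf5.1-mult}: one must verify that $P_N \E[z R'_N(z)] P_N^*$ converges to the correct diagonal limit involving $\omega_1(1/z)$, which requires the $\Sigma$-transform subordination machinery for $\boxtimes$ on $\T$ rather than on $[0,\infty)$; the nonvanishing-first-moment hypothesis on $\mu$ and $\nu$ is used precisely here to guarantee that the subordination functions are well defined. A secondary technical point is to adapt Lemma~\ref{4.10}, which is stated for self-adjoint matrices, to the unitary setting so that arcs in $\T$ are handled correctly in the final perturbation step.
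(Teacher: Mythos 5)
Your proposal follows the paper's proof in essentially identical structure: bi-invariance of Haar measure allows taking $A_N$, $B_N$ diagonal, the accumulation hypothesis leaves finitely many spikes outside $\supp(\mu)_\varepsilon$, Lemma~\ref{convMultT} controls the spike-free model, the Sylvester determinant identity produces the $M_p(\C)$-valued $F_N$, a unitary analogue of Lemma~\ref{bbcf5.1-mult} combined with Lemma~\ref{RandER} and a countable-dense-set argument gives $F_N \to F$ on compacts, and Lemma~\ref{4.5} with $\gamma = \T$ delivers the conclusion for $q = 0$ (and by symmetry for $k = 0$).

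Two points you make that the paper glosses over deserve credit. First, you write $F_N$ using the actual unitary model $A'_N U_N B'_N U_N^*$ and the normalization $(I_N - z^{-1}X'_N)^{-1}$; the paper's displayed $F_N$ in this theorem's proof carries over the expression $(zI_N - (A'_N)^{1/2}U_N B'_N U_N^*(A'_N)^{1/2})^{-1}$ verbatim from the positive-real case and is inconsistent with the surrounding text, so your formula is the one that is actually correct. Second, and more substantively, you flag that the final $k\cdot\ell > 0$ perturbation step relies on Lemma~\ref{4.10}, which is stated and proved only for Hermitian $X$, $X_0$, whereas here $X_N$ and its perturbation are unitary; the paper's ``the remainder of the argument follows identically'' silently skips this. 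Your proposed repair --- pass to a Hermitian lift via a Cayley transform, or equivalently re-prove the Riesz-projection estimate of Lemma~\ref{4.10} with a rectangular contour in the complex plane surrounding an arc of $\T$ --- is exactly the missing ingredient and is the right fix. Your choice of $\alpha\in\supp(\mu)$ rather than the paper's $1/\alpha\in\supp(\mu)$ is immaterial: either normalization produces the same zero set $v_1^{-1}(\{1/\theta_j\})$ for the diagonal entries of $F$, which is all Lemma~\ref{4.5} sees.
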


We use results analogous to Proposition \ref{4.3} and Lemma \ref{convMultT}. We get an identical result as Proposition \ref{4.3} where $R(z) = (zI_N - A_NU_NB_NU_N^*)\iv$, and Lemma \ref{convMultT} is used, but we must consider $z \in \C \bs \T$. The reduction to a $p \times p$ matrix is performed in the same way, but we choose $\alpha, \beta$ from $\T$ such that $1/\alpha \in \supp(\mu)$ and $1/\beta \in \supp(\nu)$.

\begin{proof}
We apply Lemma \ref{4.5} to our model, with $\gamma = \T$, the sequence $\{F_N\}_N$ defined by 
\[F_N(z) := (I_p - \Theta)P_N\left(zI_N - (A'_N)^{1/2} U_N B'_N U_N^* (A'_N)^{1/2} \right)\iv P_N^* + \Theta, \hspace{.5cm} z \in \C \bs \T\ ,\]
and the limit $F$ defined by 
\[F(z) = \diag\left( \left( 1-\frac{\theta_j}{\alpha}\right)\frac{1}{1-\alpha \omega_1(z\iv)} + \frac{\theta_j}{\alpha}\right)_{j=1}^{p}.\]

We notice that $F_N(z)$ is invertible for all $z \in \C \bs \T$, since if $F_N(z)$ is not invertible that $z$ must come from the spectrum of $A_NU_NB_NU_N^*$, which is contained $\T$. We have that $F_N$ converges on compact sets of $\C \bs \T$ by our modified version of Lemma \ref{bbcf5.1-mult}. The function $F$ is clearly diagonal, and once again, by the Julia-Carath\'eodory Theorem, this time applied to the disk, we have that the entries of $F$ have simple zeros. Hence we can apply Lemma \ref{4.5}, and remainder of the argument follows identically. 
\end{proof}


\section{Example and Further Work}
	
\begin{example}
We provide a numerical simulation where the number of spikes is increasing. Let the spikes come from the sets
\[S_1 = \left\{2+\frac{10}{n}\right\}_{n=1}^{10} \text{ and } S_2=\left\{2+\frac{10}{n}\right\}_{n=1}^{100}.\]
Consider our model given in Example \ref{example1}, except $N=2000$.  We display the histograms of the eigenvalues of one sample below in Figure 3 below. In part (A) we have that the spikes are the elements in $S_1$, and in part (B) the spikes are the elements from $S_2$. 

\begin{figure}[ht]
    \centering
    \begin{subfigure}[ht]{0.45\textwidth}
        \includegraphics[width=\textwidth]{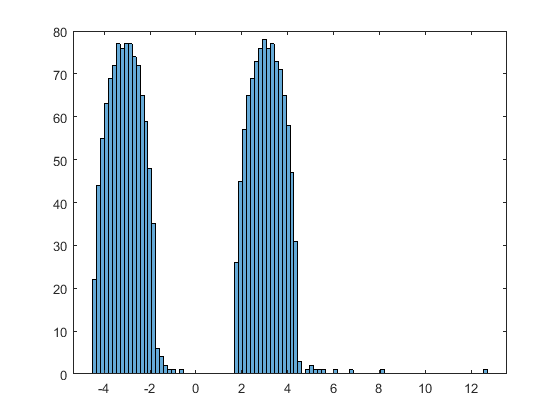}
        \caption{Spikes are $S_1$}
    \end{subfigure}
    ~ 
    \begin{subfigure}[ht]{0.45\textwidth}
        \includegraphics[width=\textwidth]{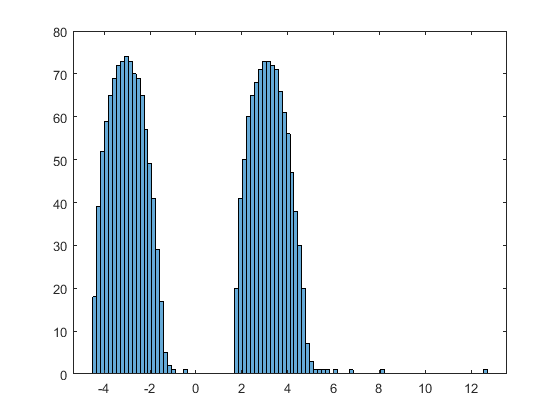}
        \caption{Spikes are $S_2$}
    \end{subfigure}
    ~ 
    \caption{Increasing number of spikes that approximate 2 from the right.}
\end{figure}
\end{example}

Further questions to explore in this area:
\begin{enumerate}
\item Do similar results hold for the case when the spikes of $A_N$ and $B_N$ accumulate a positive distance from $\supp(\mu)$ and $\supp(\nu)$?
\item Do similar results hold for the case when the spikes simply do not converge?
\end{enumerate}
For these questions, the reduction to a $p \times p$ matrix technique used in here would not be applicable, and thus they do not appear to be simple extensions of this result.

\newpage

\nocite{nica2006lectures}
\nocite{mingo2017free}
\nocite{haagerup2005new}
\nocite{voiculescu1992free}
\nocite{tao2012topics}
\nocite{capitaine2013additive}

\bibliographystyle{acm}
\bibliography{biblio}

\end{document}